\theoremstyle{plain}
\newtheorem{theorem}{Theorem}[section]
\newtheorem{lemma}[theorem]{Lemma}
\newtheorem{proposition}[theorem]{Proposition}
\theoremstyle{definition}
\newtheorem{definition}[theorem]{Definition}
\newtheorem{example}[theorem]{Example}
\theoremstyle{remark}
\newtheorem{remark}[theorem]{Remark}
\numberwithin{equation}{section}
\def\N{\mathbb N}
\begin{document}

\title{Maximal non-compactness of embeddings between sequence spaces}

\author{Anna Kneselová}

\email[A.~Kneselová]{anna.kneselova391@student.cuni.cz}

\keywords{embedding, maximal non-compactness, sequence spaces, normed linear spaces, Lorentz spaces}

\begin{abstract}
We will focus on studying the ball measure of non-compactness 
$\alpha(T)$ for various particular instances of embedding operators in sequence spaces. Our first main goal is to find necessary and sufficient conditions for an identity operator to be maximally non-compact. Next, we will focus on studying  Lorentz sequence spaces 
$\ell^{p,q}$ and their basic properties.
We will characterize the inclusions between Lorentz sequence spaces depending on the values of $p$ and $q$.
Then we will try to determine exact values of the norms of the identity operators between these embedded spaces.
Lastly, we will determine whether these identity operators are maximally non-compact by using our general theorems. 
\end{abstract}

\date{\today}

\maketitle

\bibliographystyle{abbrv}

\section*{Introduction}
\addcontentsline{toc}{chapter}{Introduction}

Investigation of compactness of mappings is arguably one of the most important subdisciplines of analysis of all times, and its results have proved useful in several fields of mathematics including contemporary functional analysis. It is interesting not only from the theoretical point of view but, at the same time, it has a wide array of applications. However, especially in infinite-dimensional spaces, the mapping of interest is often not compact.

The difference between compactness and non-compactness of an operator is often too rough, and, therefore, it is worthwhile to study its various refinements. One particular example of such a refinement is offered by studying the so-called \textit{measure of non-compactness}.

The measure of non-compactness, $\alpha(T)$, of a mapping $T$ acting from one quasinormed space to another, can attain any value between zero and the operator norm $||T||$. The mapping $T$ is compact if and only if $\alpha(T)=0$, at least on complete spaces. On the other endpoint of the segment of possible values of $\alpha(T)$, that is, in the case when $\alpha(T)$ coincides with the operator norm, the mapping is called \textit{maximally non-compact}, marking the worst possible case.

The concept of measure of non-compactness is thus a good device for quantifying \textit{how bad} the non-compactness of a mapping is. The origins of the idea go back to the 1930 paper by Kuratovskii~\cite{Kur:30}. Since its introduction, it has been proven very useful, and found many quite diverse applications, see, for instance, \cite{BG:80,Dar:55,Sad:68}. The concept of measure of noncompactness is intimately connected with the limiting behavior of entropy numbers, and the $s$-numbers in general~\cite{Car:81, CarTri:80}. 
The advances of the field led eventually to development of a formidable theory which found its way to important classical monographs such as \cite{EdmEv:18,EdmTri:96,Pie:87,Tri:78}.

An important particular example of an operator whose compactness or non-compactness is studied is the \textit{identity}. In such cases, we talk about \textit{embeddings} of the corresponding spaces or classes. Measure of noncompactness of embeddings has been studied extensively, for example in connection with Sobolev embeddings and their applications to studying properties of solution to elliptic partial differential equation. The fact that there is an immense interest in studying these problems can be illustrated by the vast amount of papers including some very recent ones. To name just very few, see e.g.~\cite{Bou:19,Hen:03,Lan:22,Lan:24, Lang21}. 

In sharp contrast to the extensive existing literature on compactness-related properties of \textit{function} spaces stands the fact that almost nothing is known about analogous problems concerning  \textit{sequence} spaces. This paper is intended as a contribution towards filling this gap. 

Our aim is to study questions about maximal non-compactness of embeddings of various sequence spaces. After stating several general statements, we shall focus on the particular scale of Lorentz sequence spaces.

The paper is structured as follows. In the next section we collect all relevant definitions and fix notation. In Section~\ref{S:results} we state and prove our first array of main results. We focus here on embeddings between general sequence spaces which are endowed at least with a quasi-norm. More precisely, we offer here a sufficient condition for the maximal non-compactness of an embedding between two sequence spaces included into $c_0$, an analogous result concerning rearrangement-invariant lattices, and also a sufficient condition for such an embedding to avoid maximal non-compactness. In our approach to the latter problem we employ the recent concept of \textit{span} of an embedding.

We then turn our attention to \textit{Lorentz sequence spaces}, which are collections of sequences determined by the two-parameter scale of Lorentz functionals. It is known that the Lorentz functional is always a quasinorm, which in some special cases turns to a norm or is equivalent to a norm. Lorentz spaces constitute a very interesting collection of sequence spaces, even slightly richer than their companions containing functions (note that, for example, spaces $\ell^{\infty,q}$ with finite $q$ are nontrivial). Let us recall that Lorentz function spaces have been treated in various monographs, see e.g.~\cite{BS87,Pic:13,Ste:71}.





In order to enable application of our general results to Lorentz spaces, we begin by showing that, for an arbitrary combination of $p,q \in[1,\infty]$ such that $\min\{p,q\} < \infty$, the space $\ell^{p,q}$ is a subset of $c_{0}$ by some elementary means. Next, we concentrate on characterizing the inclusions $\ell^{p_{1},q_{1}} \hookrightarrow \ell^{p_{2},q_{2}}$ between the Lorentz sequence spaces depending on the values of $p_1, p_{2}$ and $q_{1}, q_{2}$.
Here, our point of departure are inequalities and inclusions mentioned in \cite{Kato76}, which we extend and fill in detail to complete the proofs and give a comprehensive picture. In most 
cases, we furthermore determine the exact values of the norms of the identity operators between these spaces. This knowledge is very useful in our last set of results, in which we determine whether embedding relations between Lorentz sequence spaces are maximally non-compact. 

\section{Preliminaries}

\begin{definition}[Ball measure of non-compactness]
Let $X$, $Y$ be (quasi)-normed linear spaces over $\mathbb{R}$ and let $T\colon X \rightarrow Y$ be a bounded mapping defined on $X$ and taking values in $Y$. The \emph{ball measure of non-compactness} $\alpha(T)$ of $T$ is defined as the infimum of radii $r > 0$ for which there exists a finite set of balls in $Y$ of radii $r$ that covers $T(B_{X})$, where $B_{X}$ is the closed unit ball on $X$. In other words,
$$\alpha(T) = \inf\{r>0: T(B_{X}) \subset \bigcup_{i = 1}^m (y_{i} + rB_{Y}) \; \; y_{i}\in Y \; \& \; m\in \mathbb{N}\}.$$
\end{definition}

Let us recall that given $X$ and $Y$ (quasi)-normed linear spaces over $\mathbb{R}$ and $T\colon X \rightarrow Y$ a bounded mapping defined on $X$ and taking values in $Y$ we define the operator norm $||T||$ of $T$ as follows:
$$\|T\| = \sup_{x\in B_{X}} \|T(x)\|.$$

We will now formulate an elementary, but important inequality estimating the ball measure of non-compactness

    \begin{proposition}Let $X$ and $Y$ be (quasi)-normed linear spaces over $\mathbb{R}$ and let $T\colon X \rightarrow Y$ be a bounded linear mapping defined on $X$ and taking values in $Y$. Then:
$$0 \leq \alpha(T) \leq \|T\|.$$
\end{proposition}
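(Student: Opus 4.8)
The plan is to observe that both inequalities are immediate from the definitions, with essentially no computation required; the only care needed is with a couple of degenerate edge cases. For the left-hand inequality, note that $\alpha(T)$ is by definition the infimum of a subset of $(0,\infty)$; the infimum of any such set is non-negative (and equals $+\infty$ if the set happens to be empty), so $\alpha(T)\geq 0$ holds unconditionally.

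For the right-hand inequality, the point is that a \emph{single} ball already covers $T(B_X)$. By the definition of the operator norm we have $\|T(x)\|_Y \le \|T\|$ for every $x\in B_X$, which says precisely that $T(B_X)\subseteq \|T\|\cdot B_Y = 0 + \|T\| B_Y$, the closed ball of radius $\|T\|$ centred at the origin of $Y$. Here I use only that in an (alpha/quasi)-normed space the dilate $rB_Y$ of the closed unit ball equals $\{y\in Y : \|y\|_Y\le r\}$ for every $r>0$, which follows at once from positive homogeneity of the (quasi)-norm. Hence for every $r>0$ with $r\ge\|T\|$ the one-element family $\{\,0 + rB_Y\,\}$ is a finite cover of $T(B_X)$ by balls of radius $r$, so every such $r$ lies in the set over which the infimum defining $\alpha(T)$ is taken; taking the infimum over this set gives $\alpha(T)\le\|T\|$. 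If instead $\|T\|=0$, then $T$ is the zero operator, $T(B_X)=\{0\}$, and the conclusion $\alpha(T)=0=\|T\|$ is trivial.

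The "main obstacle", and it is a very mild one, is simply to handle these edge cases cleanly: that the defining family of admissible radii is a priori not known to be nonempty (it is, as we have just exhibited a cover), and the degenerate possibility $\|T\|=0$. Beyond boundedness of $T$ and homogeneity of the (quasi)-norms on $X$ and $Y$, nothing is used — in particular linearity plays no role — so the argument is verbatim the same in the normed, quasi-normed, and alpha-normed settings.
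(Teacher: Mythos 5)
Your proof is correct and follows essentially the same route as the paper's: both inequalities are read off from the definition, with a single ball centred at the origin of radius (at least) $\|T\|$ covering $T(B_X)$. Your explicit treatment of the edge cases ($\|T\|=0$ and the a priori nonemptiness of the set of admissible radii) is a minor, harmless refinement of the same argument.
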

The proof directly follows from the definition of the ball measure of non-compactness.


\begin{definition}[Maximal non-compactness]
Let $X$ and $Y$ be (quasi)-normed linear spaces over $\mathbb{R}$ and let $T\colon X \rightarrow Y$ be a bounded mapping defined on $X$ and taking values in $Y$. We say that $T$ is \emph{maximally non-compact} if
$$\alpha(T) = \| T \|.$$
    \end{definition}

The following statement gives us the relationship between the value of the ball measure of non-compactness of an operator and compactness itself.

\begin{proposition}\label{Prop-BP&MN}Let $X$ and $Y$ be (quasi)-normed linear spaces over $\mathbb{R}$ and let $T\colon X \rightarrow Y$ be a bounded linear mapping defined on $X$ and taking values in $Y$. Then:
\begin{enumerate}[1.]
    \item If $T$ is compact, then $\alpha(T) = 0$.
    \item If $Y$ is a complete space and $\alpha(T) = 0$, then $T$ is compact.
\end{enumerate}
\end{proposition}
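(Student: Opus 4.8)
The plan is to handle the three parts in turn, reducing each to a standard fact about total boundedness and completeness, after first noting that a quasi-normed (or $\alpha$-normed) space is metrizable by a translation-invariant metric, so that the usual metric-space reasoning applies. Concretely, by the Aoki--Rolewicz theorem one may replace the quasi-norm by an equivalent $\alpha$-norm and put $d(x,y)=\|x-y\|^{\alpha}$; completeness, total boundedness and (relative) compactness are then the classical notions.

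For part~1, I would use only the homogeneity of the (quasi-)norm and the continuity of scalar multiplication. The converse implication is immediate, since $B_{X}$ is itself a bounded subset of $X$. For the forward implication, assume $T(B_{X})$ is relatively compact, let $A\subset X$ be bounded, and fix $R>0$ with $A\subset R\,B_{X}$. Then $T(A)\subset R\cdot T(B_{X})$, and since $y\mapsto Ry$ is a homeomorphism of $Y$, the set $R\cdot\overline{T(B_{X})}$ is compact and contains $\overline{T(A)}$; hence $T(A)$ is relatively compact.

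For part~2, I would observe that a relatively compact subset of a metric space is totally bounded, so $T(B_{X})$ is totally bounded. Given any $r>0$, cover the compact set $\overline{T(B_{X})}$ by the open balls $B_{Y}(y,r)$, $y\in\overline{T(B_{X})}$, extract a finite subcover centred at $y_{1},\dots,y_{m}$, and note that the open balls are contained in the closed balls $y_{i}+rB_{Y}$, so $T(B_{X})\subset\bigcup_{i=1}^{m}(y_{i}+rB_{Y})$. Thus $\alpha(T)\le r$ for every $r>0$, whence $\alpha(T)=0$. For part~3, the hypothesis $\alpha(T)=0$ says exactly that for each $n\in\mathbb{N}$ the set $T(B_{X})$ admits a finite cover by balls of radius $1/n$, i.e.\ $T(B_{X})$ is totally bounded. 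Since $Y$ is complete, $\overline{T(B_{X})}$ is complete and totally bounded, hence compact, so $T(B_{X})$ is relatively compact and, by part~1 (applied to the bounded set $B_{X}$), $T$ is compact.

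I do not expect any serious obstacle here; the statement is essentially a packaging of well-known facts. The one point I would be careful about is precisely the passage from normed to quasi-/$\alpha$-normed spaces: I want to justify once and for all that the equivalences ``relatively compact $\Rightarrow$ totally bounded'' and ``complete $+$ totally bounded $\Rightarrow$ compact'' remain valid, which is what the metrizability remark above is for. A minor bookkeeping detail is the open-versus-closed-ball distinction in the definition of $\alpha(T)$, which is harmless since an open $r$-ball is contained in the corresponding closed ball $y_{i}+rB_{Y}$.
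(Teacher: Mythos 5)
Your proposal is correct and follows essentially the same route as the paper: all three parts reduce to the equivalence of relative compactness with total boundedness (plus completeness in part~3), with part~1 handled by rescaling a bounded set into a multiple of $B_X$. The only differences are cosmetic — you argue part~1 by scaling sets where the paper scales sequences, and you add an explicit metrizability remark via Aoki--Rolewicz, which the paper leaves implicit.
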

Variants of Proposition~\ref{Prop-BP&MN} are well known. The assertion can be easily obtained using standard results in analysis, in particular, the definition of a compact linear operator and the equivalence of total boundedness and relative compactness in complete metric spaces.

\begin{definition}[Embedding]\label{D:embedding}
    Assume that $(\ell, \|\cdot\|_{\ell})$  and $(w, \|\cdot\|_{w})$ are sequence (quasi)-normed linear spaces over $\mathbb{R}$. We say that $\ell$ is \emph{embedded} into $w$ (we shall denote this as $\ell \hookrightarrow w$) if $\ell$ is a linear subspace of $w$ and the identity operator is bounded in the sense that there exists a constant $C \geq 0$ such that for every $a\in \ell$ one has $a\in w$ and $\|a\|_{w} \leq C \|a\|_{\ell}.$
\end{definition}

We denote by $\mathcal P(\mathbb N)$ the power set of the set $\mathbb N$ of all natural numbers. We endow the set $\N$ with the usual $\sigma$-finite counting measure $m\colon\mathcal P(\mathbb N)\to[0,\infty]$.

Below we define a decreasing rearrangement for sequences. We use the definition from the book \cite[Definition~1.1 and Definition~1.5; page 36--39]{BS87} and reformulate it only for the special case where we set the space $(X, \mathcal{A}, \mu)$  with a $\sigma$-finite measure $\mu$ equal to 
$(\mathbb{N}, \mathcal P(\mathbb N), m)$.

\begin{definition}[Distribution function and decreasing rearrangement]\label{Def: a^*}
Let us consider the measure space $(\mathbb{N}, \mathcal P(\mathbb N), m)$. For a sequence $\{a\}_{n = 1}^{\infty} \subset \mathbb{R}$ we define its \emph{distribution function} $m_{a}\colon[0, \infty) \rightarrow [0, \infty]$ in the following way:
$$m_{a}(\omega) = m\{n \in \mathbb{N}\colon |a_{n}| > \omega\}.$$
Then for $t \geq 0$ we define the \emph{decreasing rearrangement}:
$$a^{*}(t) = \inf\{ \omega > 0 \colon m_{a}(\omega) \leq t \}$$
and the \emph{decreasing rearrangement $\{a^{*}\}_{n = 1}^{\infty}$} of the sequence $\{a\}_{n = 1}^{\infty}$ as
$$a_{n}^{*}= a^{*}(t) = \inf\{ \omega > 0 \colon m_{a}(\omega) \leq n-1 \}$$ 
for $n-1 \leq t < n$.
\end{definition}

When equipped with the decreasing rearrangement, we are in a position to introduce the crucial notion of a rearrangement-invariant lattice, which will have decisive significance for determining maximal noncompactness of an embedding into the space $c_0$.

\begin{definition}[Rearrangement-invariant lattice]\label{Def: r.i.lattice}
Let $(\ell, \|\cdot\|_{\ell})$ be a\\
(quasi)-normed sequence space over $\mathbb{R}$. We say that $\ell$ is a~\emph{rearrangement-invariant lattice} if it satisfies the following axioms:
\begin{enumerate}[1.]
    \item Let $b \in \ell$ and let $a$ be a sequence such that for all $n \in \mathbb{N}\colon |a_{n}| \leq |b_{n}|$, then also $a \in \ell$ and  $\|a\|_{\ell} \leq \|b\|_{\ell}$.
    \item For every $a \in \ell$ it holds that $\|a^{*}\|_{\ell} = ||a||_{\ell}$.
\end{enumerate}
\end{definition}

We now introduce a notion which, as we will show later, will be important in determining whether an embedding operator is not maximally non-compact.

\begin{definition}[Sequence space span]
Let $\ell$ be a (quasi)-normed sequence space over $\mathbb{R}$. We define the \emph{span} $\sigma_{\ell}$ of $\ell$ as follows:
\begin{equation}\label{span def}
    \sigma_{\ell} := \sup_{y\in B_{\ell}}\left(\sup_{n\in \mathbb{N}}y_{n}- \inf_{n\in \mathbb{N}}y_{n} \right).
\end{equation}
\end{definition}

We will recall the definitions of some familiar sequence spaces, namely, $\ell^{p}$ for $1 \leq p \leq \infty$ and $c_{0}$. One has
\begin{align*}
&\ell^{p} = \{ \{x_{n}\}_{n=1}^{\infty} \subset \mathbb{R} ;
        \sum_{n=1}^{\infty} |x_{n}|^p  < + \infty\}\text{ with norm } \|x \|_{p} = \left(\sum_{n=1}^{\infty} |x_{n}|^{p}\right)^{1/p}\text{ and } p< \infty,
    \\
&\ell_{\infty} = \{ \{x_{n}\}_{n=1}^{\infty} \subset \mathbb{R}; \sup_{n \in \mathbb{N}}|x_{n}| < + \infty\}\text{ with norm } \|x\|_{\infty}= \sup_{n \in \mathbb{N}}|x_{n}|\text{ and } p = \infty,
    \\
&c_{0} = \{ \{x_{n}\}_{n=1}^{\infty} \subset \mathbb{R} ; \;
        \lim_{n\to \infty} x_{n} = 0 \}\text{ with norm }\|x\|_{\infty}.
\end{align*}

We shall now introduce the Lorentz sequence spaces, a pivotal example of a scale of sequence spaces whose rich and diverse structure will enable us to illustrate on them our general results.

\begin{definition}[Lorentz sequence spaces $\ell^{p,q}$] Let  $p,q \in (0, \infty]$. We define \emph{Lorentz sequence space} $\ell^{p,q}$ as a space of all sequences $a = \{a_{n}\}_{n=1}^{\infty} \subset \mathbb{R}$, for which the value  $\|a\|_{p,q}$ defined below is finite:
\begin{equation*}
        \|a\|_{p,q}=
        \begin{cases}
        \left(\displaystyle\sum_{n=1}^{\infty} (a_{n}^{*})^{q} n^{\frac{q}{p} - 1}\right)^{\frac{1}{q}}, & \text{if $0 < q < \infty $ and $0 < p \leq \infty$},\\\\
        \displaystyle\sup_{n\in \mathbb{N}} \{n^{1/p}a_{n}^{*}\}, & \text{if $q = \infty$ and $0 < p \leq \infty$},
        \end{cases}
\end{equation*}
where $a^{*} = \{a_{n}^{*}\}_{n=1}^{\infty} $ is the decreasing rearrangement of $a$.
\end{definition}

\begin{remark} Throughout, we adopt the convention $1/\infty = 0$. 
\end{remark}

\begin{remark} Below are some of the basic properties of Lorentz sequence spaces which follow directly from the definition for all $p,q \in (0, \infty]$: 
    \begin{enumerate}
        \item $\|a\|_{p,q} =\| n^{\frac{1}{p} - \frac{1}{q}}a_{n}^{*} \|_{\ell^{q}}$,
        \item $\ell^{p,p} = \ell^{p}$.
        
    \end{enumerate}
\end{remark}

\begin{remark} 
    Standard techniques show that $\|\cdot\|_{p,q}$ is a quasinorm whenever $p,q\in(0,\infty]$. Moreover, it can be shown that, for $1 \leq q \leq p\leq \infty$, it is a norm. For further details, see e.g.~\cite[p.~76, Proposition 1]{Kato76}.
\end{remark}


\begin{remark}
Any Lorentz sequence space $\ell^{p,q}$, $p,q \in (0
, \infty]$, trivially satisfies the axioms in Definition~\ref{Def: r.i.lattice}. Therefore, it is a rearrangement-invariant lattice.
\end{remark}

We will now formulate two well-known elementary observations which will come handy in the sequel. 

\begin{lemma}\label{a+b q>1} Let $a,b > 0$ and $q\geq1$, then 
    \begin{equation*}
          (a+b)^{q}\leq 2^{q-1}(a^{q}+b^{q}).
    \end{equation*}
\end{lemma}


\begin{lemma}\label{a+b q<1} Let $a,b > 0$ and $0<q<1$, then 
    \begin{equation*}
          (a+b)^{q}\leq a^{q}+b^{q}.
    \end{equation*}
\end{lemma}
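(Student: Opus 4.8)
The plan is to exploit homogeneity. Both sides of $(a+b)^{q}\leq a^{q}+b^{q}$ are positively homogeneous of degree $q$ in the pair $(a,b)$, so after dividing through by $(a+b)^{q}$ it suffices to treat the normalized case $a+b=1$. Writing $s=a\in(0,1)$ and $b=1-s\in(0,1)$, the claim reduces to the single inequality $1\leq s^{q}+(1-s)^{q}$.

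Next I would invoke the elementary fact that $t^{q}\geq t$ for every $t\in(0,1)$ when $0<q<1$: indeed $t^{q}=t\cdot t^{q-1}$ and $t^{q-1}\geq 1$ on $(0,1]$ because the exponent $q-1$ is negative. Applying this with $t=s$ and with $t=1-s$ and adding the two inequalities yields $s^{q}+(1-s)^{q}\geq s+(1-s)=1$, which is exactly what we need, and undoing the normalization gives the lemma in full generality. An alternative route that avoids the reduction step is to fix $b>0$ and consider $g(a)=a^{q}+b^{q}-(a+b)^{q}$ on $[0,\infty)$; then $g(0)=0$ and $g'(a)=q\bigl(a^{q-1}-(a+b)^{q-1}\bigr)\geq 0$ since $t\mapsto t^{q-1}$ is decreasing on $(0,\infty)$ and $a\leq a+b$, so $g$ is nondecreasing and hence $g(a)\geq 0$ for all $a\geq 0$.

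There is no genuine obstacle here — this is the familiar subadditivity of the concave power $t\mapsto t^{q}$ — but the one point that needs care in either argument is the sign of $q-1$: because $0<q<1$ the exponent $q-1$ is negative, so $t\mapsto t^{q-1}$ is \emph{decreasing} rather than increasing, and it is precisely this that makes both the inequality $t^{q}\geq t$ and the sign of $g'$ come out correctly.
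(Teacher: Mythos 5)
Your proof is correct, and your primary route is genuinely different from the paper's. The paper divides by $b^{q}$, reduces to showing $x^{q}+1\geq(x+1)^{q}$ for $x=a/b\geq 0$, and then differentiates $f(x)=x^{q}+1-(x+1)^{q}$, using that $t\mapsto t^{q-1}$ is decreasing to conclude $f'\geq 0$ and $f\geq f(0)=0$. You instead normalize by $(a+b)^{q}$ and reduce to $s^{q}+(1-s)^{q}\geq 1$ for $s\in(0,1)$, which follows from the purely algebraic observation that $t^{q}\geq t$ on $(0,1]$ when $0<q<1$; this avoids calculus altogether and is arguably the cleaner argument. Your alternative route (fixing $b$ and showing $g(a)=a^{q}+b^{q}-(a+b)^{q}$ is nondecreasing from $g(0)=0$) is essentially the paper's proof up to the choice of normalization. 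Both of your arguments correctly hinge on the one delicate point, the sign of $q-1$, which you flag explicitly.
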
 

\section{Main results}\label{S:results}

We begin with a sufficient condition for an embedding between fairly general sequence spaces to be maximally noncompact. For the key idea of our approach to work it is important that the spaces are contained by $c_0$.

\begin{proposition}\label{prop-MN-||I||<1}
Let $\ell$  and $w$ be sequence (quasi)-normed linear spaces over $\mathbb{R}$ satisfying:
\begin{enumerate}
    \item $\ell,\ w \subset c_{0}$,
    \item for all $x \in w$ and for all $k\in\mathbb{N}$ it holds that $|x_{k}|\leq\|x\|_{w}$,
    \item the embedding $ I\colon \ell \rightarrow w$ satisfies
    $0 < \|I\| \leq 1$, and $e^{j} \in B_{\ell}$ for every $j \in \mathbb{N}$, where
    $\|I\| =\displaystyle\sup_{x\in B_{\ell}} \|x\|_{w}$.   
   
\end{enumerate}
Then $I$ is maximally non-compact.
\end{proposition}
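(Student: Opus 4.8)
The plan is to show directly that $\alpha(I)\ge\|I\|$, since the reverse inequality is automatic from the elementary proposition above. Recall $\alpha(I)$ is the infimum of radii $r$ for which finitely many balls $y_1+rB_w,\dots,y_m+rB_w$ cover $I(B_\ell)$. So fix any such finite cover and any $\varepsilon>0$ with $\varepsilon<\|I\|$; I will exhibit a point of $I(B_\ell)$ that forces $r>\|I\|-\varepsilon$, which gives $\alpha(I)\ge\|I\|-\varepsilon$ and hence, letting $\varepsilon\to0$, the conclusion.

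The key idea is to ``escape to infinity'' using the standard unit vectors $e^j$, which lie in $B_\ell$ by hypothesis (3). First pick $x\in B_\ell$ with $\|x\|_w>\|I\|-\varepsilon$ (possible by definition of $\|I\|$ as a supremum). The point I actually test against the cover is a suitable translate $x + \lambda e^{N}$ for a large index $N$ and an appropriate scalar $\lambda$ of modulus close to $\|I\|$; alternatively, and perhaps more cleanly, I can test the two points $x+te^N$ and $x-te^N$ (or $x+te^N$ and $x$) and argue that no single ball of radius $r<\|I\|-\varepsilon$ can contain both. Concretely: since $x\in c_0$ (as $w\subset c_0$ and $x\in\ell\subset c_0$), choose $N$ so large that $|x_N|$ is negligible; then by hypothesis (2), if a point $z$ lies in a ball $y_i+rB_w$, its $N$-th coordinate satisfies $|z_N-(y_i)_N|\le\|z-y_i\|_w\le r$. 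Feeding in two test points whose $N$-th coordinates differ by roughly $2\|I\|$ (using that $\|\lambda e^N\|_w=|\lambda|\,\|e^N\|_w$ and that $\|e^N\|_w$ is controlled from below — indeed $\|e^N\|_w\ge|(e^N)_N|$ cannot be used directly, so instead I use that $e^N\in B_\ell$ forces $\|I e^N\|_w\le\|I\|$, and I should rather scale to make the $w$-norm of the perturbation close to $\|I\|$) shows that both cannot sit in one ball of radius $<\|I\|-\varepsilon$. Since the cover is finite, infinitely many indices $N$ are available but only finitely many balls, so by pigeonhole some ball must contain test points for two different large indices $N,N'$; comparing their $N$-th (resp.\ $N'$-th) coordinates via hypothesis (2) yields the contradiction with $r<\|I\|-\varepsilon$.

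Let me restate the mechanism I expect to use, since getting the bookkeeping right is the crux. For each large index $j$ consider the point $v_j := x + \mu\, e^{j} \in B_\ell$? — but note $x+\mu e^j$ need not be in $B_\ell$, so instead I should work with the pure vectors: the family $\{\,\|I\|\,e^{j}\,\}_{j}$? again scaling is the subtlety. Cleanest route: use that $\sup_{x\in B_\ell}\|x\|_w=\|I\|$ is attained in the limit, fix $x\in B_\ell$ with $\|x\|_w>\|I\|-\varepsilon$, and observe $x\in c_0$; then for the \emph{same} $x$, since only its tail matters, the points $T x$ are ``spread out'' in the sense that $x$ has a coordinate $x_{k}$ with $|x_k|>\|I\|-\varepsilon$ is \emph{false} in general (that would need hypothesis (2) as an equality) — so the honest argument is the pigeonhole one: among the points $\{x^{(j)}\}$ obtained from $x$ by shifting its ``mass'' to index $j$ (formally, if $x=\{x_n\}$ take $x^{(j)}$ to be $x$ with coordinates permuted so that a coordinate of large modulus sits in slot $j$ while slot $j$ of the others is small; this preserves membership in $B_\ell$ and the $w$-norm when $w$ is a rearrangement-invariant lattice, which covers the intended applications, but the proposition as stated does not assume r.i., so one instead uses hypothesis (2) coordinatewise on the original $e^j$'s). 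The hard part, and where I would spend the most care, is precisely this: extracting from hypotheses (1)--(3) alone (no rearrangement invariance) a bona fide one-parameter family of unit-ball vectors whose images under $I$ are pairwise at $w$-distance $\ge 2(\|I\|-\varepsilon)$ or at least cannot be covered by one small ball — I anticipate this is done by combining $x$ (nearly norming) with large multiples of $e^j$ and carefully using $0<\|I\|\le1$ together with the coordinate bound (2) to control cross terms, so that the finitely-many-balls/infinitely-many-indices pigeonhole closes the argument.
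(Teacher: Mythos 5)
Your proposal circles the right idea---escaping to infinity along the coordinate vectors $e^{j}$ and reading off a single coordinate via hypothesis (2)---but it never closes, and you say so yourself (``the hard part \dots\ is precisely this''). Two ingredients are missing. First, the test points: you do not need the nearly norming element $x$ at all, nor any combination $x+\lambda e^{N}$ (which, as you correctly observe, need not lie in $B_{\ell}$). Hypothesis (3) gives both $e^{j}\in B_{\ell}$ and $\|I\|\le 1$, so by homogeneity the scaled vectors $\|I\|\,e^{j}$ already lie in $B_{\ell}=I(B_{\ell})$ and have $j$-th coordinate exactly $\|I\|$; these are the points the paper tests. Second, and decisively, the hypothesis $w\subset c_{0}$ must be applied to the finitely many ball \emph{centers} $y^{1},\dots,y^{m}$, not to the test points: for any $\varrho\in(\alpha(I),\|I\|)$ there is $j_{0}$ with $|(y^{i})_{j}|<\|I\|-\varrho$ for all $j\ge j_{0}$ and all $i$, whence by hypothesis (2) every $z\in y^{i}+\varrho B_{w}$ satisfies $|z_{j}|\le|(y^{i})_{j}|+\varrho<\|I\|$, so no ball of the cover can contain $\|I\|\,e^{j}$ once $j\ge j_{0}$. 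That single observation finishes the proof with no pigeonhole.

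The route you do sketch cannot be repaired into the sharp statement. Applying the $c_{0}$ property only to $x$ and then pigeonholing two indices $N\neq N'$ into one ball of radius $r$ yields, upon comparing the $N$-th coordinates $\|I\|$ and $0$ of the two test points with the common center via hypothesis (2), only $\|I\|\le 2r$, i.e.\ $\alpha(I)\ge\|I\|/2$ rather than $\alpha(I)=\|I\|$; and the variant with $\pm\|I\|\,e^{N}$ fails because nothing forces the two signs into the same ball for any single $N$. Without the uniform smallness of the centers' tails you cannot exclude a cover by balls of radius just above $\|I\|/2$, so the gap is genuine.
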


\begin{proof} We will prove the proposition by contradiction.
Let us assume that $I$ is not maximally non-compact. Then
$\alpha(I) < \|I\|$ and we take $\varrho \in (\alpha(I), \|I\|)$.
Then from the definition of the ball measure of non-compactness we find $m \in \mathbb{N}$ and $y^{1},\dots, y^{m} \in w$ such that:
$$I(B_{\ell}) \subset \bigcup_{i = 1}^m (y^{i} + \varrho B_{w}).
$$
Note that
$y^{i} \in w\subset c_{0}$ for each $i \in \{1, \dots , m \}$. So owing to the definition of a limit we can find $j_{0} \in \mathbb{N}$ satisfying: $|(y^{i})_{j}|< \|I\|-\varrho$ for all $j \geq j_{0}$ and for all $i \in \{1, \dots, m \}$. Now, for every $ j \geq j_{0}$, the sequence $\|I\| e^{j} = (0,\dots,\ 0, \ \|I\|,\ 0,\dots)$ satisfies $||I|| e^{j} \in I(B_{\ell})$,
but $\|I\| e^{j} \notin (y^{i} + \varrho B_{w})$ for any
$i \in \{1, \dots, m \}$.
Because for every $ z \in (y^{i} + \varrho B_{w})$, we have
$ (z)_{j} < (\|I\|-\varrho) + \varrho = \|I\|$,
so we get our contradiction, hence $\alpha(I) =\|I\|$, and $I$ is maximally non-compact.
\end{proof}

We will now present an example which demonstrates that Proposition~\ref{prop-MN-||I||<1} does not hold without the second condition.

\begin{example} 
Let $p\in [1,\infty)$ and $\alpha \in (0, 1)$. Set $\ell=\ell^{p}$ and let $w_\alpha$ be the weighted $\ell^{p}$ space endowed with the norm 
$$
    \|x \|_{w_\alpha} = \left( |x_{1}|^{p} + \alpha\sum_{n=2}^{\infty} |x_{n}|^{p} \right)^{1/p}.
$$
Then $\ell$, $w_\alpha$ and embedding $ I\colon \ell \rightarrow w_\alpha$ satisfy all but the second assumption of Proposition~\ref{prop-MN-||I||<1}, but $I$ is not maximally non-compact and 
$\alpha(I) \leq \alpha^{\frac{1}{p}} < \|I\|$.

Indeed, this can be verified by the following reasoning.
    \begin{enumerate}[1.]
    \item One has $\ell, w_\alpha \subset c_{0}$ owing to the necessary condition for the sum convergence.
    \item The space $w_\alpha$ does not satisfy the condition that for all $x \in w_\alpha$ and for all $k\in\mathbb{N}$ it holds that $|x_{k}|\leq\|x\|_{w_\alpha}$. For example take $ x = e^{2} = (0,1,0,0,\dots)$, then 
    $1 = |x_{2}|>\|x\|_{w_\alpha} =\alpha^{\frac{1}{p}}$. 
    \item We will verify the third assumption of Proposition~\ref{prop-MN-||I||<1}.
    \begin{enumerate}[a)]
    \item  It is easy to see that $e^{j} \in B_{\ell}$ for every $j \in \mathbb{N}$.
    \item  For all $x \in \ell^{p}$ we have $\|x\|_{w_\alpha} \leq \|x\|_{p}$ and $\|e^{1}\|_{p} = \|e^{1}\|_{w_\alpha} = 1$, hence $\|I\| = 1$.
    \end{enumerate}
\end{enumerate}
We show that $\alpha(I) \leq \alpha^{\frac{1}{p}}$.
First we notice that for $n>1$ each element $y_{n}$ of a sequence $y \in B_{w_\alpha}$ can be taken $\left(\frac{1}{\alpha}\right)^{\frac{1}{p}}$ times larger than in $B_{\ell^{p}}$. Therefore, the ball $\alpha^{\frac{1}{p}}B_{w_\alpha}$ would cover all sequences $ x \in B_{\ell^{p}}$ such that $x_{1} = 0$.
The remaining sequences $ x \in B_{\ell^{p}}$, where $ 0 < x_{1} \leq 1$, can be covered by a finite number of balls of radius $\alpha^{\frac{1}{p}}$, where the number of balls depends on the size of $\alpha^{\frac{1}{p}}$.
More precisely, if $\alpha^{\frac{1}{p}} \in \left(\frac{1}{k+1},\frac{1}{k}\right]$ for some $k\in \N$, we can cover the remaining sequences with $(2k+1)$ balls of type 
$$\left( 0 +\alpha^{\frac{1}{p}}B_{w_\alpha}\right),\left(\pm \alpha^{\frac{1}{p}}e^{1} +\alpha^{\frac{1}{p}}B_{w_\alpha}\right), \left(\pm 2\alpha^{\frac{1}{p}}e^{1} +\alpha^{\frac{1}{p}}B_{w_\alpha}\right), \dots, \left(\pm (k-1)\alpha^{\frac{1}{p}}e^{1} +\alpha^{\frac{1}{p}}B_{w_\alpha}\right).$$
Altogether, we we have shown that the radius in the definition of maximal non-compactness is less than or equal to $\alpha^{\frac{1}{p}}$. Hence $I$ is not maximally non-compact.
\end{example}

\bigskip

 We shall now state our first main result, in which we formulate a sufficient condition for the maximal non-compactness of an embedding of rearrangement-invariant lattices contained in $c_0$.

\begin{theorem}\label{THM:MN-c_0 subspaces}
Let $\ell$  and $w$ be sequence (quasi)-normed linear spaces over $\mathbb{R}$ satisfying: 
\begin{enumerate}
    \item $\ell \hookrightarrow w \subset c_{0}$,
    \item  $\ell,\ w $ are rearrangement-invariant lattices.
\end{enumerate}
Then the embedding operator $I$ is maximally non-compact.
\end{theorem}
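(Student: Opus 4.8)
The plan is to prove $\alpha(I)\ge\|I\|$, which together with the elementary estimate $\alpha(I)\le\|I\|$ proved above gives $\alpha(I)=\|I\|$. So suppose, towards a contradiction, that $\alpha(I)<\|I\|$, fix $\varrho$ with $\alpha(I)<\varrho<\|I\|$, and pick an auxiliary $\varrho'$ with $\varrho<\varrho'<\|I\|$. From the definition of the ball measure of non-compactness there are $m\in\mathbb{N}$ and $y^{1},\dots,y^{m}\in w$ with $I(B_{\ell})\subseteq\bigcup_{i=1}^{m}(y^{i}+\varrho B_{w})$.

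First I would manufacture a convenient near-extremal element of $B_{\ell}$. By the definition of $\|I\|$ there is $x^{0}\in B_{\ell}$ with $\|x^{0}\|_{w}>\varrho'$; since both $\ell$ and $w$ are rearrangement-invariant lattices (axiom~2 of Definition~\ref{Def: r.i.lattice}), replacing $x^{0}$ by its decreasing rearrangement changes neither membership in $B_{\ell}$ nor the $w$-norm, so I may assume $x^{0}=(x^{0})^{*}$ is non-negative and non-increasing; moreover $x^{0}\in\ell\hookrightarrow w\subseteq c_{0}$, so $x^{0}_{n}\downarrow 0$. Now I truncate: put $v^{(N)}=(x^{0}_{1},\dots,x^{0}_{N},0,0,\dots)$. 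By axiom~1 of Definition~\ref{Def: r.i.lattice} applied in $\ell$ we have $\|v^{(N)}\|_{\ell}\le\|x^{0}\|_{\ell}\le1$, hence $v^{(N)}\in B_{\ell}$; and since $\|v^{(N)}\|_{w}\uparrow\|x^{0}\|_{w}$ as $N\to\infty$, for $N$ large enough $\|v^{(N)}\|_{w}>\varrho'$. Fix such an $N$ and write $v:=v^{(N)}$, a non-negative non-increasing sequence supported in $\{1,\dots,N\}$ with $v\in B_{\ell}$ and $\|v\|_{w}>\varrho'$. \emph{I expect this passage to a finitely supported near-extremal element to be the main obstacle}; see the last paragraph.

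Next I would push disjoint copies of $v$ out to infinity. For $k\in\mathbb{N}$ let $v^{[k]}$ be the sequence obtained by placing the block $(v_{1},\dots,v_{N})$ at positions $B_{k}:=\{kN+1,\dots,(k+1)N\}$ and zeros elsewhere. Inserting leading and trailing zeros does not change the distribution function, so $(v^{[k]})^{*}=v^{*}=v$; by axiom~2 again, $v^{[k]}\in B_{\ell}$ and $\|v^{[k]}\|_{w}=\|v\|_{w}>\varrho'$, so $v^{[k]}\in I(B_{\ell})$ for every $k$. Now fix $i\in\{1,\dots,m\}$. Since $y^{i}\in w\subseteq c_{0}$, as $k\to\infty$ the $N$ numbers $v_{j}-y^{i}_{kN+j}$ ($1\le j\le N$) converge to $v_{1},\dots,v_{N}$. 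The quasi-norm $\|\cdot\|_{w}$ restricted to the $N$-dimensional subspace of sequences supported in $B_{k}$ is continuous and, there, depends only on the decreasing rearrangement of the $N$ coordinate values; hence $\bigl\|(v^{[k]}-y^{i})\chi_{B_{k}}\bigr\|_{w}\to\|v\|_{w}$. By axiom~1 applied in $w$, $\|v^{[k]}-y^{i}\|_{w}\ge\bigl\|(v^{[k]}-y^{i})\chi_{B_{k}}\bigr\|_{w}$, so for $k$ large $\|v^{[k]}-y^{i}\|_{w}>\varrho'>\varrho$, and since there are only finitely many indices $i$ this can be arranged simultaneously for all of them. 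Then $v^{[k]}\notin\bigcup_{i=1}^{m}(y^{i}+\varrho B_{w})$, contradicting the cover. Therefore $\alpha(I)\ge\|I\|$, and $\alpha(I)=\|I\|$.

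Two comments on where the difficulty lies. The contradiction step above is the block analogue of the single-spike argument used in Proposition~\ref{prop-MN-||I||<1}; it is robust precisely because it never applies the (quasi-)triangle inequality to differences of test elements — only the lattice axiom and the coordinatewise decay $y^{i}_{n}\to0$ are used — so the quasi-norm constant of $w$ is irrelevant, and this is why no analogue of the pointwise condition~(2) of Proposition~\ref{prop-MN-||I||<1} is needed here. The delicate ingredient is the reduction to a \emph{finitely supported} near-extremal element, i.e. the assertion $\|v^{(N)}\|_{w}\to\|x^{0}\|_{w}$, equivalently $\sup\{\|v\|_{w}:v\in B_{\ell},\ v\text{ finitely supported}\}=\|I\|$. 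For every Lorentz sequence space this is immediate — for $q<\infty$ by monotone convergence in the defining series, and for $q=\infty$ because $\|v^{(N)}\|_{p,\infty}=\sup_{n\le N}n^{1/p}x^{0}_{n}\uparrow\sup_{n}n^{1/p}x^{0}_{n}$ — which is all that the later applications require; in full generality it is an order-continuity/Fatou-type property of the lattices that should be verified, and it is the point where the real content of the theorem is concentrated.
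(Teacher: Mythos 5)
You correctly isolate where your argument is incomplete, and that gap is genuine: the reduction to a finitely supported near-extremal element, i.e.\ the claim $\|v^{(N)}\|_{w}\uparrow\|x^{0}\|_{w}$, is a Fatou-type/order-continuity property that simply is not among the two axioms of Definition~\ref{Def: r.i.lattice}, and the theorem is stated for arbitrary rearrangement-invariant lattices in the sense of that definition. Since the statement compares $\alpha(I)$ with $\|I\|$ \emph{exactly}, you cannot even afford to pass to an equivalent norm with the Fatou property: a multiplicative constant would destroy the conclusion. So as written the proof establishes the theorem only for lattices in which finitely supported elements of $B_{\ell}$ realize $\sup_{x\in B_\ell}\|x\|_w$ (which does cover the Lorentz applications, but not the stated hypotheses). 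A secondary soft spot is the assertion that the quasi-norm restricted to an $N$-dimensional block is continuous: a quasi-norm need not be a continuous function of the coordinates, so the limit $\|(v^{[k]}-y^{i})\chi_{B_{k}}\|_{w}\to\|v\|_{w}$ is not automatic either; this one is repairable (see below), but the truncation step is the real obstruction.

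The paper's proof shows that the truncation is avoidable altogether, and this is the idea you are missing. Fix $\lambda\in(0,1)$ with $\varrho/(1-\lambda)<\|I\|$ and $x\in B_{\ell}$ with $\|x\|_{w}>\varrho/(1-\lambda)$. Instead of cutting $x^{*}$ off and translating the resulting block, spread out the \emph{entire} sequence $x^{*}$: using $y^{1},\dots,y^{m}\in c_{0}$, choose $j_{1}<j_{2}<\cdots$ so that $|(y^{i})_{j_{k}}|\le\lambda x_{k}^{*}$ for every $i$ and $k$, and set $a:=\sum_{k}x_{k}^{*}e^{j_{k}}$. Then $a^{*}=x^{*}$, so by axiom~2 of Definition~\ref{Def: r.i.lattice} one has $a\in B_{\ell}$ with no loss of $w$-norm, and at each position $j_{k}$ the difference satisfies $a_{j_{k}}-(y^{i})_{j_{k}}\ge(1-\lambda)x_{k}^{*}\ge0$; axiom~1 then gives $\|a-y^{i}\|_{w}\ge\|(1-\lambda)x^{*}\|_{w}=(1-\lambda)\|x\|_{w}>\varrho$, contradicting the cover. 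This uses only the two lattice axioms, coordinatewise decay of the $y^{i}$, and positive homogeneity — no Fatou property, no density of finitely supported elements, and no continuity of the quasi-norm. The same $\lambda$-thinning device, applied within your blocks $B_{k}$ (take $k$ so large that $|(y^{i})_{n}|\le\lambda v_{j}$ for all $n\in B_{k}$ and all $j$ with $v_{j}>0$), would also replace your continuity argument; but only the non-truncated version proves the theorem in the stated generality.
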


\begin{proof} Let us assume $\|I\| > 0$, since the theorem holds for the pathological case when $\ell$ is a trivial space.
We will prove the proposition by contradiction.
Let us assume that $I$ is not maximally non-compact, then
$\alpha(I) < \|I\|$. We take $\varrho \in (\alpha(I), \|I\|)$ and find $\lambda \in (0,1)$ such that:
\begin{equation}
    \frac{\varrho}{1-\lambda} < \|I\|.
\end{equation}
From the definition of $\|I\| =\displaystyle\sup_{x\in B_{\ell}} \|x\|_{w}$ we find $x \in B_{\ell}$ satisfying:
\begin{equation}\label{ineq: THM -MN-c_0- x}
    \|x\|_{w} > \frac{\varrho}{1-\lambda}.
\end{equation}
Then from the definition of the ball measure of non-compactness we find $m \in \mathbb{N}$ and $y^{1},\dots, y^{m} \in w$ such that
\begin{equation}\label{contradiction-coverage}
    I(B_{\ell}) \subset \bigcup_{i = 1}^m (y^{i} + \varrho B_{w}).
\end{equation}
Let us now define a new sequence $ \varepsilon = \{\varepsilon_{k}\}_{k = 1}^{\infty}$. For all $k\in \mathbb{N}$ we set
\begin{equation}\label{THM-MN-c_0-def epsilon}
    \varepsilon_{k} = \lambda x_{k}^{*}.
\end{equation}
Owing to~\eqref{THM-MN-c_0-def epsilon},  $ \varepsilon \in c_0$.
Then from the positive homogeneity of a (quasi)-norm, our prerequisite that $w$ is a rearrangement-invariant lattice, and inequality \eqref{ineq: THM -MN-c_0- x}, we get
\begin{equation}\label{THM-MN-c_0> >varrho}
    \|x^{*} - \varepsilon\|_{w} = \|\{ x_{k}^{*} - \lambda x_{k}^{*}\}\|_{w} = \|(1-\lambda)x^{*}\|_{w} = (1-\lambda) \|x\|_{w} > \varrho.
\end{equation}
Now $y^{i} \in w\subset c_{0}$ for each $i \in \{1, \dots , m \}$.
So for every $k \in \mathbb{N}$, there exists $j_{k} \in \mathbb{N}$, such that for every $k\ge 1$, one has $j_{k+1}>j_k$, and for all $i \in \{1, \dots, m \}$
\begin{equation}\label{ineq-THM-MN-c_0-y^i<epsilon}
    |(y^{i})_{j_k}|\le\varepsilon_{k}.
\end{equation}
Let us now define a new sequence:
\begin{equation}
    a := \sum_{k = 1}^{\infty}x_{k}^{*}e^{j_{k}}.
\end{equation}
Then $a^*=x^*$. Consequently, $a \in B_{\ell}$, $\|a\|_{\ell} =\|x\|_{\ell}$ and $\|a\|_{w} =\|x\|_{w}$, since $\ell$ and $w$ are rearrangement-invariant lattices and $x\in B_{\ell}$.
Now for every $i \in \{1, \dots, m \}$ and every $k \in \mathbb{N}$ from inequality \eqref{ineq-THM-MN-c_0-y^i<epsilon} and definition of $a$:
\begin{equation}
a_{j_{k}} - y_{j_{k}}^{i}=x_{k}^{*} - y_{j_{k}}^{i}\ge x_{k}^{*} - \varepsilon_{k}\ge0.
\end{equation} 
This together with inequality \eqref{THM-MN-c_0> >varrho} yields
\begin{flalign*}
    \|a - y^{i}\|_{w} 
    & = \displaystyle \|\sum_{k=1}^{\infty} e^{j_{k}}x^*_{k} - y^{i}\|_{w}
    \geq \displaystyle \|\sum_{k=1}^{\infty} e^{j_{k}}(x^*_{k} - y_{j_{k}}^{i})\|_{w}\\
    & \geq \displaystyle \|\sum_{k=1}^{\infty} e^{j_{k}}(x_{k}^{*} - \varepsilon_{k})\|_{w}\\
    & = \|x^{*} - \varepsilon\|_{w}\\
    & > \varrho.
\end{flalign*}
We showed that $a \notin (y^{i} + \varrho B_{w})$ for all $i \in \{1, \dots, m \}$. Which gives us a contradiction with our claim \eqref{contradiction-coverage} for any choice of $\varrho \in (\alpha(I), \|I\|)$. Therefore $I$ is maximally non-compact, and $\alpha(I) = \|I\|$.
\end{proof}

Our next main result gives a~general sufficient condition for an embedding operator into $\ell^{\infty}$ to avoid maximal non-compactness.

\begin{theorem}\label{THM-not max noncompact}   
Let $\ell$ be a sequence (quasi)-normed linear space over 
$\mathbb{R}$ satisfying $\ell \hookrightarrow \ell^{\infty}$ and
\begin{equation}\label{eq-sup-inf}
       \|I\|\le\sigma_{\ell}<2\|I\|.
\end{equation}
Then the embedding operator $I\colon \ell \rightarrow \ell^{\infty}$ is not maximally non-compact. Moreover, $\alpha(I) \leq \sigma_{\ell}/2$.

\end{theorem}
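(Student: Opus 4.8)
The plan is to establish the quantitative bound $\alpha(I)\le\sigma_\ell/2$ directly; once this is in hand, the inequality $\sigma_\ell<2\|I\|$ from \eqref{eq-sup-inf} (only this half of \eqref{eq-sup-inf} will be used) gives at once $\alpha(I)<\|I\|$, so $I$ is not maximally non-compact. The key observation is that the span is exactly the right device: the estimate $\sup_n y_n-\inf_n y_n\le\sigma_\ell$, valid for every $y\in B_\ell$, says that all coordinates of $y$ lie in a single real interval of length at most $\sigma_\ell$; hence by subtracting a suitable constant sequence we can compress $y$ to $\ell^\infty$-norm at most $\sigma_\ell/2$, and the constants that arise range only over a bounded set, so they can be discretized into an $\varepsilon$-net.

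To carry this out, for $y\in B_\ell\subset\ell^\infty$ put $c(y):=\tfrac12\bigl(\sup_n y_n+\inf_n y_n\bigr)$ and let $\mathbf 1=(1,1,\dots)\in\ell^\infty$. Since $\inf_m y_m\le y_n\le\sup_m y_m$ for every $n$, one gets $|y_n-c(y)|\le\tfrac12(\sup_m y_m-\inf_m y_m)\le\sigma_\ell/2$, so $\|y-c(y)\mathbf 1\|_{\ell^\infty}\le\sigma_\ell/2$. Moreover, by the very definition of $\|I\|$ we have $\|y\|_{\ell^\infty}\le\|I\|$ for $y\in B_\ell$, hence $|c(y)|\le\|y\|_{\ell^\infty}\le\|I\|$, i.e.\ $c(y)$ always lies in the compact interval $[-\|I\|,\|I\|]$. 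Now fix $\varepsilon>0$, take a finite $\varepsilon$-net $t_1,\dots,t_m$ of $[-\|I\|,\|I\|]$, and for $y\in B_\ell$ choose $i$ with $|c(y)-t_i|<\varepsilon$; the triangle inequality in the normed space $\ell^\infty$ then gives
\[
\|y-t_i\mathbf 1\|_{\ell^\infty}\le\|y-c(y)\mathbf 1\|_{\ell^\infty}+|c(y)-t_i|\,\|\mathbf 1\|_{\ell^\infty}\le\frac{\sigma_\ell}{2}+\varepsilon .
\]
Therefore $I(B_\ell)\subset\bigcup_{i=1}^m\bigl(t_i\mathbf 1+(\sigma_\ell/2+\varepsilon)B_{\ell^\infty}\bigr)$, so $\alpha(I)\le\sigma_\ell/2+\varepsilon$; letting $\varepsilon\to0^+$ yields $\alpha(I)\le\sigma_\ell/2$, and combining with $\sigma_\ell/2<\|I\|$ completes the argument.

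I do not expect a genuine obstacle here: the whole proof rests on the single ``translate by a constant sequence'' idea, and the rest is routine. The only points needing a little care are bookkeeping ones: that the covering balls are legitimately centered at points of $\ell^\infty$ (they are, since each $t_i\mathbf 1\in\ell^\infty$), that $\sigma_\ell<\infty$ (immediate from $\sigma_\ell<2\|I\|<\infty$), and the meaning of $\sup_n y_n$, $\inf_n y_n$ when $\mathbb K=\mathbb C$ — in that case one simply applies the same estimate to the real and imaginary parts separately (equivalently, reads the span coordinatewise), and nothing in the argument changes.
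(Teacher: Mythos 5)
Your proof is correct and follows essentially the same strategy as the paper's: cover $I(B_\ell)$ by finitely many balls of radius just above $\sigma_\ell/2$ centered at constant sequences, the span hypothesis guaranteeing that every $y\in B_\ell$ lies within $\sigma_\ell/2$ of some constant sequence. Your organization via the midpoint $c(y)$ and an $\varepsilon$-net of $[-\|I\|,\|I\|]$ is a little cleaner than the paper's explicit grid $\lambda_k$ with its case analysis on $\inf y$, and it has the minor bonus of never using the left half $\|I\|\le\sigma_\ell$ of \eqref{eq-sup-inf}.
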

\begin{proof}
    Denote $\varrho \in (\sigma_{\ell}/2, \|I\|)$ and consider $m \in \mathbb{N}$ such that 
    \begin{equation}\label{eq01:THM}
        \left( 1+ \frac{1}{m}\right)\frac{\sigma_{\ell}}{2 }< \varrho.
    \end{equation}
    Define $\lambda_{k} = \frac{\sigma_{\ell} k}{2m}$ for $k = -m, \dots , m$ and let $y^{k}$ be a constant sequence defined by $(y^{k})_{j} = \lambda_{k}$ for every $j\in \mathbb{N}$ and $k = -m, \dots , m$. We will show that
    
    \begin{equation}\label{eq02:THM}
        I(B_{\ell}) \subset \bigcup_{i = -m}^m (y^{i} + \varrho B_{\ell^{\infty}}),
    \end{equation}
    proving  $\alpha(I) \leq \varrho < \|I\|$. Assume $y \in B_{\ell}$. Then $y \in B_{\ell^\infty}(0,\|I\|)$ and $|y_{j}| \leq \|I\| \leq \sigma_{\ell}$ for every $j\in \mathbb{N}$. 
    Now from \eqref{span def} it follows that $ \sup y - \inf y \leq \sigma_{\ell}$. We shall now distinguish three cases.
    
    \begin{enumerate}[a)]
        \item If $\inf y = - \sigma_{\ell}$, then $\sup y \in [- \sigma_{\ell},0]$. Thus,  $y_{j} \in [-\sigma_{\ell}, 0]$ for each $j\in \mathbb{N}$, and we claim that $y \in y^{-m} + \varrho B_{\ell^{\infty}}$.
        Indeed, since $\varrho>\frac{\sigma_{\ell}}{2}$ and for every $j\in \mathbb{N}$ we have $(y^{-m})_j = \lambda_{-m} = \frac{\sigma_{\ell} (-m)}{2m}=-\frac{\sigma_{\ell}}{2}$, the claim follows.
        
        \item If $\inf y \in (- \sigma_{\ell},0]$, then there is a unique $k \in \{-m+1,\dots , m\}$ such that $\inf y + \sigma_{\ell}/2 \in (\lambda_{k-1}, \lambda_{k}] = ( \frac{\sigma_{\ell} (k-1)}{2m}, \frac{\sigma_{\ell} k}{2m}] \subset (- \sigma_{\ell}/2, \sigma_{\ell}/2]$. Then by the choice of $\varrho > \sigma_{\ell}/2$ and inequality \eqref{eq-sup-inf},
        \begin{equation*}
            \lambda_{k} + \varrho > \lambda_{k} + \sigma_{\ell}/2 \geq \inf y + \sigma_{\ell} \geq \sup y.
        \end{equation*}
        Here, we get the second inequality from $$\inf y + \sigma_{\ell} \in (\lambda_{k-1} + \sigma_{\ell}/2, \lambda_{k} + \sigma_{\ell}/2].$$
        On the other hand, using the definition of $\lambda_{k}$ and inequality \eqref{eq01:THM}, we arrive at
        \begin{equation*}
           \inf y > \lambda_{k-1} - \sigma_{\ell}/2 =\frac{\sigma_{\ell} k - \sigma_{\ell} -\sigma_{\ell} m}{2m} = \lambda_{k} - \left( 1+ \frac{1}{m}\right)\frac{\sigma_{\ell}}{2 } > \lambda_{k}-\varrho.
        \end{equation*}

        \item If $\inf y \in (0, \sigma_{\ell}]$, then $y_{j} \in [0,\sigma_{\ell}]$ for each $j\in \mathbb{N}$, hence, evidently,
        $y\in y^m+\varrho B_{\ell^{\infty}}$.
    \end{enumerate}
     Altogether, we showed that $y \in y^{k} + \varrho B_{\ell^{\infty}}$ for some $k \in \{-m, \dots, m \}$, and \eqref{eq02:THM} follows.
     Therefore $\alpha(I) \leq \sigma_{\ell}/2 < \|I\|$ and $I$ is not maximally non-compact.
\end{proof}

\bigskip

We will now state an example of a maximally non-compact embedding, where the target space is not a subspace of $c_0$, and therefore Theorem~\ref{THM:MN-c_0 subspaces} cannot be used.

\begin{theorem}\label{ex: c_0 -> l_infty}
Let $I$ be the embedding operator $I\colon c_{0} \rightarrow \ell_{\infty} $. 
Then $I$ is maximally non-compact.
\end{theorem}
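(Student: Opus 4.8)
The plan is to prove the stronger conclusion $\alpha(I)\ge\|I\|$; combined with the elementary inequality $\alpha(I)\le\|I\|$ already established, this gives $\alpha(I)=\|I\|$, i.e. maximal non-compactness. First I would observe that $I$ is in fact an \emph{isometric} embedding: for every $x\in c_{0}$ one has $\|I(x)\|_{\ell_{\infty}}=\sup_{n\in\mathbb N}|x_{n}|=\|x\|_{c_{0}}$, so that $\|I\|=1$ and it suffices to prove $\alpha(I)\ge 1$. Also note $I(B_{c_{0}})=B_{c_{0}}$ as a subset of $\ell_{\infty}$.

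To prove $\alpha(I)\ge 1$ I would argue by contradiction: assume $\alpha(I)<1$ and fix $\varrho\in(\alpha(I),1)$, so that by the definition of the ball measure of non-compactness there are $m\in\mathbb N$ and $y^{1},\dots,y^{m}\in\ell_{\infty}$ with $B_{c_{0}}\subset\bigcup_{i=1}^{m}(y^{i}+\varrho B_{\ell_{\infty}})$. The heart of the argument is to exhibit, inside $B_{c_{0}}$, an arbitrarily large family of points that are pairwise far apart in the $\ell_{\infty}$ metric. Fix $N\in\mathbb N$ and consider the $2^{N}$ finitely supported sign sequences $\varepsilon=(\varepsilon_{1},\dots,\varepsilon_{N},0,0,\dots)$ with each $\varepsilon_{j}\in\{-1,1\}$. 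Every such $\varepsilon$ belongs to $B_{c_{0}}$, and any two distinct ones differ in at least one of the first $N$ coordinates, where both take values in $\{-1,1\}$; hence their $\ell_{\infty}$-distance is exactly $2$. Now apply the triangle inequality: if two distinct such sequences $\varepsilon,\varepsilon'$ lay in the same ball $y^{i}+\varrho B_{\ell_{\infty}}$, then $2=\|\varepsilon-\varepsilon'\|_{\ell_{\infty}}\le\|\varepsilon-y^{i}\|_{\ell_{\infty}}+\|y^{i}-\varepsilon'\|_{\ell_{\infty}}\le 2\varrho<2$, which is absurd. Thus the $2^{N}$ sign sequences occupy $2^{N}$ distinct balls, forcing $m\ge 2^{N}$. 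Since $N$ is arbitrary, this contradicts the finiteness of $m$. Consequently no $\varrho<1$ admits a finite cover of $B_{c_{0}}$ by balls of radius $\varrho$, so $\alpha(I)\ge 1=\|I\|$ and $I$ is maximally non-compact.

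I expect the only genuinely non-routine point to be the choice of the separated family. The key realization is that, although $c_{0}$ is separable, its unit ball contains for every $N$ a set of $2^{N}$ elements whose mutual $\ell_{\infty}$-distances all equal $2$, whereas a ball of radius $\varrho<1$ has diameter strictly below $2$ and can therefore host at most one of them; this is exactly the feature that fails inside $c_{0}$ itself (which is why such an embedding \emph{into} a separable target can be compact) but is available when the target is $\ell_{\infty}$. Everything else reduces to the triangle inequality and the definition of $\alpha$, so no further obstacles are anticipated.
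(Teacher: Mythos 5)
Your proof is correct, but it takes a different route from the paper's. The paper argues by diagonalization against the putative finite cover: given centers $y^{1},\dots,y^{m}$, it builds a \emph{single} witness $a\in B_{c_{0}}$ by setting $a_{j}=1$ if $(y^{j})_{j}<0$ and $a_{j}=-1$ if $(y^{j})_{j}\ge 0$ for $j\le m$ (and $0$ afterwards), so that $a$ escapes the $i$-th ball already at the $i$-th coordinate. You instead exhibit a \emph{universal} $2$-separated family, the $2^{N}$ sign sequences supported on the first $N$ coordinates, and conclude by pigeonhole that any cover by balls of radius $\varrho<1$ needs at least $2^{N}$ balls; this is a packing-number argument that does not depend on the particular centers and in addition gives the quantitative lower bound $2^{N}$ on the covering numbers of $I(B_{c_{0}})$. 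Both arguments are elementary and complete; the paper's produces one explicit uncovered point, yours shows more, namely that no finite cover of any size can work. One side remark of yours is off: the same $2$-separated family already lives in $B_{c_{0}}$ with the same mutual distances measured in the $c_{0}$-norm, so this feature does not ``fail inside $c_{0}$ itself'' --- indeed the identity on $c_{0}$ is likewise maximally non-compact by the identical argument; this aside plays no role in your proof and does not affect its validity.
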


\begin{proof} First, we compute the operator norm of $I$.
    Let $x = \{x_{n}\}_{n=1}^{\infty} \in c_{0}$, then
    $\|I(x)\| = \|x\|_{\infty}$. So $I$ is a linear isometry from $c_{0}$ to $\ell_{\infty}$, which implies that $\|I\| = 1$ and it achieves its norm on an arbitrary element $x\in B_{c_{0}}$ such that $\|x\|_{\infty}=1$.
    
    We will proceed with the proof by contradiction.
    Let us assume that $I$ is not maximally non-compact. Then
    $\alpha(I) < \|I\| = 1$ and we take $\varrho \in (\alpha(I), 1)$.
    Then from the definition of the ball measure of non-compactness we find $y^{1},\dots, y^{m} \in \ell_{\infty}$, where $m \in \mathbb{N}$, such that:
    $$I(B_{c_{0}}) \subset \bigcup_{i = 1}^m (y^{i} + \varrho B_{\ell_{\infty}}). $$
    We will define a sequence $a=(a_j)_{j=1}^\infty$ in the following way:
    \begin{equation*}
        a_j=\begin{cases}
        1, & \text{if $(y^j)_j<0$ and $j\in\{1,\dots,m\}$},\\
        -1, & \text{if $(y^j)_j\ge 0$ and $j\in\{1,\dots,m\}$},\\
         0, & \text{for $j>m$}.   
            \end{cases}
    \end{equation*}
    Trivially $a \in c_{0}$, because it has only a finite number of non-zero elements. Also $\|a \| _{\infty} = \sup_{n \in \mathbb{N}}|a_{n}| = 1$.
    So $a \in B_{c_{0}}$.
    
    We will show that $a \notin y^{i} + \varrho B_{\ell_{\infty}}$ for any $i \in \{1, \dots, m \}$. Fix $i \in \{1, \dots, m \}$ and $j\in \mathbb{N}$.
    Then for all $z \in y^{i} + \varrho B_{\ell_{\infty}}$ it holds that $z_j \in [(y^{i})_j-\varrho, \ (y^{i})_j+\varrho]$.
    We have $\varrho \in (0, 1)$, thus for any $j\in \mathbb{N}$ it cannot happen that $\{1,-1\} \subset [(y^{i})_j-\varrho, \ (y^{i})_j+\varrho]$, because this interval can contain at most one of the elements of the set $\{1, \ -1\}$.
    Let us take $j\in\{1,\dots,m\}$ arbitrarily. From the definition of $a$ it follows that the element $a_{j} \notin [(y^{j})_j-\varrho, \ (y^{j})_j+\varrho]$.
    So, the whole sequence $a$  cannot belong to the ball $(y^{j} + \varrho B_{\ell_{\infty}})$. This holds for every $j\in\{1,\dots,m\}$.
    And thus we have shown that $a \notin \bigcup_{i = 1}^m (y^{i} + \varrho B_{\ell_{\infty}})$ and we get a contradiction.
\end{proof}

\begin{remark}
    We could not use Theorem~\ref{THM-not max noncompact} for the embedding $I\colon c_0 \rightarrow \ell_{\infty}$ in Theorem~\ref{ex: c_0 -> l_infty}, because $\sigma_{c_0} = 2$. It is achieved for example on the sequence $y =(1,-1, 0,0, \dots) \in B_{c_{0}}$. 
\end{remark}

\section{Lorentz sequence spaces}\label{S:Lorentz}
In this section we shall focus on one particular class of sequence spaces, namely the two-parameter scale of Lorentz spaces $\ell^{p,q}$. 
This class of spaces is surprisingly rich and contains many diverse examples of interesting relations. 

To begin, we single out those Lorentz spaces, which are contained in $c_0$.

\begin{lemma}\label{lorentz subset c_0} Let $p,q \in(0,\infty]$ and $\min\{p,q\} < \infty$, then $\ell^{p,q} \subset c_{0}$. 
\end{lemma}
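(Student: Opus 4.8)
The plan is to reduce the assertion to the single statement that the decreasing rearrangement $a^{*}$ tends to $0$, and then to read this off from the finiteness of $\|a\|_{p,q}$ by separating the endpoint $q=\infty$ from the case $q<\infty$. Fix $a\in\ell^{p,q}$. I claim it suffices to prove that $a_{n}^{*}\to 0$. Indeed, suppose $a_{n}^{*}\to 0$ and fix $\varepsilon>0$. Choose $N$ with $a_{N}^{*}<\varepsilon$. By Definition~\ref{Def: a^*}, $a_{N}^{*}=\inf\{\omega>0\colon m_{a}(\omega)\le N-1\}$, so there is $\omega<\varepsilon$ with $m_{a}(\omega)\le N-1<\infty$; that is, only finitely many indices $n$ satisfy $|a_{n}|>\omega$, hence $|a_{n}|<\varepsilon$ for all but finitely many $n$. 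Since $\varepsilon>0$ was arbitrary, $a_{n}\to 0$, i.e.\ $a\in c_{0}$. (In fact the converse implication holds too, giving $a\in c_{0}\iff a_{n}^{*}\to 0$, but only this direction is needed.)

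\textbf{Case $q=\infty$.} Then the hypothesis $\min\{p,q\}<\infty$ forces $p<\infty$. From $\|a\|_{p,\infty}=\sup_{n\in\mathbb{N}}n^{1/p}a_{n}^{*}<\infty$ we obtain at once $a_{n}^{*}\le n^{-1/p}\|a\|_{p,\infty}$, and since $1/p>0$ the right-hand side tends to $0$; hence $a_{n}^{*}\to 0$.

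\textbf{Case $q<\infty$.} Here $\sum_{n=1}^{\infty}(a_{n}^{*})^{q}n^{q/p-1}=\|a\|_{p,q}^{q}<\infty$, so in particular the series converges. If $p\le q$, then $q/p-1\ge 0$, so $n^{q/p-1}\ge 1$ for $n\ge 1$, whence $(a_{n}^{*})^{q}\le (a_{n}^{*})^{q}n^{q/p-1}\to 0$ as a term of a convergent series, and $a_{n}^{*}\to 0$. If $p>q$ (which includes $p=\infty$), then $q/p-1<0$ and term-wise decay no longer controls $a_{n}^{*}$ directly; instead I would exploit that $a^{*}$ is non-increasing. For $N\ge 1$, grouping a dyadic block of the tail gives
\[
\sum_{n=N+1}^{2N}(a_{n}^{*})^{q}n^{q/p-1}\ \ge\ (a_{2N}^{*})^{q}\sum_{n=N+1}^{2N}n^{q/p-1}\ \ge\ (a_{2N}^{*})^{q}\,N\,(2N)^{q/p-1}\ =\ 2^{q/p-1}(a_{2N}^{*})^{q}N^{q/p}\ \ge\ 2^{q/p-1}(a_{2N}^{*})^{q},
\]
using $q/p>0$ so that $N^{q/p}\ge 1$. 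The left-hand side is a tail of a convergent series, hence tends to $0$ as $N\to\infty$; therefore $a_{2N}^{*}\to 0$, and by monotonicity $a_{n}^{*}\to 0$.

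\textbf{Main obstacle.} The only non-routine point is the subcase $q<\infty$, $p>q$ (in particular $p=\infty$), where convergence of the defining series does not by itself force the individual factors $a_{n}^{*}$ to vanish; the monotonicity of the decreasing rearrangement, used through a dyadic grouping of the series tail, is exactly what is required to close this gap. The preliminary reduction also deserves a careful (if short) justification via the distribution function, as above.
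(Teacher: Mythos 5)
Your proof is correct. It reaches the same conclusion as the paper but is organized differently. The paper argues by contradiction: if $a\notin c_{0}$, then $a_{n}^{*}\ge\varepsilon$ for every $n$ and some $\varepsilon>0$, whence $\|a\|_{p,q}\ge\varepsilon\left(\sum_{n}n^{q/p-1}\right)^{1/q}$ (resp.\ $\ge\varepsilon\sup_{n}n^{1/p}$), which is infinite because $q/p-1\ge-1$; this disposes of all values of $p$, including $p=\infty$, in one stroke. You instead prove directly that $a_{n}^{*}\to0$, and you justify the reduction $a_{n}^{*}\to0\Rightarrow a\in c_{0}$ carefully via the distribution function, a step the paper passes over quickly in the reverse direction. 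Your case split is finer: for $p\le q$ you need only that the terms of a convergent series vanish, while for $p>q$ your dyadic grouping of the tail, combined with the monotonicity of $a^{*}$, serves as a localized quantitative substitute for the paper's appeal to the divergence of $\sum_{n}n^{q/p-1}$. Both arguments ultimately rest on the same two facts --- monotonicity of the decreasing rearrangement and the slow decay of the weights $n^{q/p-1}$ --- so neither is more general; the paper's version is shorter, while yours is more self-contained precisely at the one point ($p>q$, $q<\infty$) where term-wise decay alone does not suffice.
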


\begin{proof} Let us assume that $\ell^{p,q} \not\subset c_{0}$ and take $a \in \ell^{p,q} \setminus c_{0}$.
Then there exists $ \varepsilon > 0$, such that for all $ n_{0}\in \mathbb{N} $ there exists $ n \geq n_{0},\ n\in \mathbb{N}\colon |a_{n}| \geq \varepsilon$, which implies that $a_{n}^{*} \  \geq \ \varepsilon$ for every $n \in \mathbb{N}$.
First, let us look at the case when $q<\infty$ and $p\in(0,\infty]$. Then: $$\infty > \|a\|_{p,q} \geq \varepsilon \left( \sum_{n=1}^{\infty}(n^{\frac{1}{p} - \frac{1}{q} })^{q}\right)^{\frac{1}{q}} = \varepsilon\left( \sum_{n=1}^{\infty}n^{\frac{q}{p}-1}\right)^{\frac{1}{q}} = \infty,$$
because for all $p\in (0, \infty]$ one has $ (\frac{q}{p}-1) \geq -1$, and we know that
$\sum_{n=1}^{\infty}n^{\alpha}$ diverges for $\alpha \geq -1$. 
This gives us a contradiction with our assumption that $a \in \ell^{p,q}$.
Similarly, for $q = \infty$ and $p\in(0,\infty)$ we get $\infty > \|a\|_{p,\infty} \geq \sup_{n\in \mathbb{N}} \{\varepsilon n^{1/p}\} = \infty$, which is a contradiction once again.
\end{proof}

The statements of the following two lemmas occur in the article \cite[p.~77-78]{Kato76}, however, the verification of the statements is only sketched there. Some ideas of proofs are furthermore similar to analogous arguments known from the nonatomic case (see e.g.~\cite[Section~8.2]{Pic:13}), but since in the discrete case other techniques have to be used, we include the simple verification.

\begin{lemma}\label{lemma(a_n)*ineq} Let $p\in(0,\infty]$, $q \in(0,\infty)$ and $a = \{a_{n}\}_{n=1}^{\infty} \in \ell^{p,q}$, then for all $n \in \mathbb{N}$ it holds that
\begin{enumerate}[1.]
    \item $a_{n}^{*} \leq \left( \frac{q}{p}\right)^{\frac{1}{q}}n^{-\frac{1}{p}}\| a \|_{p,q}$, if $0 < p \leq q < \infty$,
    \item $a_{n}^{*} \leq n^{-\frac{1}{p}}\| a \|_{p,q}$, if $0< q < p \leq \infty$.
\end{enumerate}   
\end{lemma}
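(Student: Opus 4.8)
The plan is to exploit the monotonicity of the decreasing rearrangement: since $a^*$ is non-increasing, for every fixed $n$ and every $k \le n$ one has $a_k^* \ge a_n^*$. This lets me bound $\|a\|_{p,q}$ from below by the contribution of the first $n$ terms, each of which is at least $a_n^*$, and then solve the resulting inequality for $a_n^*$. The two cases $0<p\le q$ and $0<q<p$ differ only in how sharply one estimates the partial sum $\sum_{k=1}^n k^{q/p-1}$, so I would treat them via the same mechanism but with two different elementary estimates for that sum.

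First I would write, using the first remark after the definition of Lorentz spaces,
\[
\|a\|_{p,q}^q = \sum_{k=1}^\infty (a_k^*)^q k^{q/p-1} \ge \sum_{k=1}^n (a_k^*)^q k^{q/p-1} \ge (a_n^*)^q \sum_{k=1}^n k^{q/p-1},
\]
where the last step uses $a_k^* \ge a_n^*$ for $k \le n$. It then remains to bound $\sum_{k=1}^n k^{q/p-1}$ from below. In case 2, $0<q<p\le\infty$, we have $q/p - 1 < 0$, so $k^{q/p-1} \ge n^{q/p-1}$ for each $k\le n$, giving $\sum_{k=1}^n k^{q/p-1} \ge n \cdot n^{q/p-1} = n^{q/p}$; substituting and taking $q$-th roots yields $a_n^* \le n^{-1/p}\|a\|_{p,q}$, which is exactly statement 2. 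In case 1, $0<p\le q<\infty$, the exponent $q/p-1 \ge 0$, and I would use the integral comparison $\sum_{k=1}^n k^{q/p-1} \ge \int_0^n t^{q/p-1}\,\mathrm{d}t = \frac{p}{q} n^{q/p}$ (valid because $t\mapsto t^{q/p-1}$ is non-decreasing on $[0,n]$, so each summand dominates the integral over $[k-1,k]$). This gives $(a_n^*)^q \cdot \frac{p}{q} n^{q/p} \le \|a\|_{p,q}^q$, hence $a_n^* \le \big(\tfrac{q}{p}\big)^{1/q} n^{-1/p}\|a\|_{p,q}$, which is statement 1. One should also note the degenerate subcase $p=\infty$ in statement 2 (then $q/p-1 = -1$, the sum is the harmonic partial sum $\ge 1 = n^0 = n^{-1/p}$-scaled appropriately, and $n^{-1/p}=1$), and $q=\infty$ is excluded by hypothesis, so the formula $\|a\|_{p,q}^q = \sum (a_k^*)^q k^{q/p-1}$ is always the relevant one.

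The only mild obstacle is making the partial-sum estimate in case 1 fully rigorous at the left endpoint: the function $t^{q/p-1}$ may blow up at $t=0$ when $q/p-1<0$, but in case 1 we have $q/p-1\ge 0$ so $t^{q/p-1}$ is bounded and non-decreasing on $[0,n]$, and the comparison $\int_{k-1}^k t^{q/p-1}\,\mathrm{d}t \le k^{q/p-1}$ holds termwise; summing over $k=1,\dots,n$ gives $\frac{p}{q}n^{q/p} = \int_0^n t^{q/p-1}\,\mathrm{d}t \le \sum_{k=1}^n k^{q/p-1}$, as needed. (When $q/p-1=0$, i.e. $p=q$, both sides equal $n$ and the bound is trivially an equality, consistent with $\big(\tfrac qp\big)^{1/q}=1$.) No deeper input is required; the whole argument is a one-line lower bound for $\|a\|_{p,q}$ followed by an elementary estimate of a monotone partial sum, carried out separately according to the sign of $q/p-1$.
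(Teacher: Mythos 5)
Your proposal is correct and follows essentially the same route as the paper: bound $\|a\|_{p,q}^q$ from below by the first $n$ terms, use $a_k^*\ge a_n^*$ for $k\le n$, and then estimate $\sum_{k=1}^n k^{q/p-1}$ from below by $\tfrac{p}{q}n^{q/p}$ (when $p\le q$) or by $n^{q/p}$ (when $q<p$). The only cosmetic difference is that you justify the partial-sum bound in case 1 by direct integral comparison, whereas the paper phrases the same estimate via the Lagrange mean value theorem applied to $t\mapsto \tfrac{p}{q}t^{q/p}$.
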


\begin{proof} We will prove the individual cases:
\begin{enumerate}[1.] 
\item Let $0< p \leq q < \infty$ and fix $n \in \mathbb{N}$, Then:
\begin{flalign*}
    \| a \|_{p,q}^{q}  
    & =\displaystyle\sum_{i=1}^{\infty} i^{\frac{q}{p} - 1}(a_{i}^{*})^{q}\\
    & \geq \displaystyle\sum_{i=1}^{n} i^{\frac{q}{p} - 1}(a_{i}^{*})^{q}\\
    & \geq (a_{n}^{*})^{q}\displaystyle\sum_{i=1}^{n} i^{\frac{q}{p} - 1}\\
    & \geq (a_{n}^{*})^{q}\frac{p}{q}\displaystyle\sum_{i=1}^{n}\left( i^{\frac{q}{p}}-(i-1)^{\frac{q}{p}}\right)\\
    & = (a_{n}^{*})^{q}\left( \frac{p}{q}\right)n^{\frac{q}{p}}.
\end{flalign*}
Here, the second inequality follows from the fact that $(a_{i}^{*})$ is the decreasing rearrangement.
The third inequality results from the Lagrange mean value theorem. Indeed,
for all $i\in \mathbb{N}$, there exists $ \xi \in (i-1, i)$, that satisfies for the function $f(t) =\frac{p}{q}t^{\frac{q}{p}}$, where $ t\geq 0$ following equality:
$$f^{'}(\xi) = \xi^{\frac{q}{p} - 1} = \frac{p}{q}\left[i^{\frac{q}{p}}-(i-1)^{\frac{q}{p}}\right].$$
Also $i^{\frac{q}{p} - 1} \geq \xi^{\frac{q}{p} - 1}$, because $\frac{q}{p} \geq 1$ and $i> \xi > 0$.
From the estimates above we get the third inequality and 1. is proven.

\item The assertion can be shown analogously to the one in 1. We get the following inequality:
\begin{flalign*}
    \| a \|_{p,q}^{q}
    & \geq (a_{n}^{*})^{q}\displaystyle\sum_{i=1}^{n} i^{\frac{q}{p} - 1}\\
    & \geq (a_{n}^{*})^{q}n n^{\frac{q}{p} - 1}\\
    & = (a_{n}^{*})^{q}n^{\frac{q}{p}}.
\end{flalign*}
Here the second inequality follows from the fact that 
$\frac{q}{p} \in [0,1)$ and $i\leq n$. Therefore $\left(\frac{q}{p} -1\right) \in [-1,0)$, so we get $ n^{\left(\frac{q}{p} -1\right)} \leq i^{\left(\frac{q}{p} -1\right)}$,
and 2. is proven.

\end{enumerate}
\end{proof}

\begin{theorem}\label{lemma-inclusion-p,p,q1,q2} Let $0< p \leq \infty$, $0< q_{1} < q_{2}  \leq \infty$, Then: $$\ell^{p,q_{1}} \hookrightarrow \ell^{p,q_{2}},$$ and for all $ a \in \ell^{p,q_{1}}$ it holds that
    \begin{enumerate}[1.]
    \item $\|a \|_{p,q_{2}}\leq \left( \frac{q_{1}}{p}\right)^{\frac{1}{q_{1}} - \frac{1}{q_{2}}} \| a \|_{p,q_{1}}$ if $p < q_{1}$,
    \item $\| a \|_{p,q_{2}}\leq \| a \|_{p,q_{1}}$ if $p\geq q_{1}$.
    \end{enumerate}
\end{theorem}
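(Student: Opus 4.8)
The plan is to reduce everything to the case $q_2 < \infty$ first, then treat $q_2 = \infty$ separately. For the finite case, fix $a \in \ell_{p,q_1}$ and write
$$
\|a\|_{p,q_2}^{q_2} = \sum_{n=1}^{\infty} (a_n^*)^{q_2} n^{\frac{q_2}{p}-1}
= \sum_{n=1}^{\infty} \Bigl( (a_n^*)^{q_1} n^{\frac{q_1}{p}-1} \Bigr)\,\cdot\, (a_n^*)^{q_2-q_1}\, n^{(\frac{q_2}{p}-1) - \frac{q_2}{q_1}(\frac{q_1}{p}-1)}.
$$
A short computation shows that the trailing exponent of $n$ equals $1 - \frac{q_2}{q_1}$, i.e. it is exactly what is needed so that the factor $(a_n^*)^{q_2-q_1} n^{1-\frac{q_2}{q_1}} = \bigl( n^{-1/p} a_n^* \cdot n^{1/p}\bigr)^{q_2 - q_1}\cdot n^{\,\text{(something)}}$ can be bounded by the pointwise estimate from Lemma~\ref{lemma(a_n)*ineq}. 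Concretely, factoring out $(a_n^*)^{q_2-q_1}$ and using $a_n^* \le c\, n^{-1/p}\|a\|_{p,q_1}$ (with $c = (q_1/p)^{1/q_1}$ when $p<q_1$, and $c=1$ when $p\ge q_1$) turns the summand into $c^{q_2-q_1}\|a\|_{p,q_1}^{q_2-q_1}\cdot (a_n^*)^{q_1} n^{\frac{q_1}{p}-1}$. Summing over $n$ gives $\|a\|_{p,q_2}^{q_2} \le c^{q_2-q_1}\|a\|_{p,q_1}^{q_2-q_1}\cdot \|a\|_{p,q_1}^{q_1} = c^{q_2-q_1}\|a\|_{p,q_1}^{q_2}$, and taking $q_2$-th roots yields $\|a\|_{p,q_2} \le c^{1-\frac{q_1}{q_2}}\|a\|_{p,q_1}$, which is exactly the claimed constant $(q_1/p)^{\frac1{q_1}-\frac1{q_2}}$ in case 1 and the constant $1$ in case 2.

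For the endpoint $q_2 = \infty$, the target norm is $\|a\|_{p,\infty} = \sup_n n^{1/p} a_n^*$, and this follows directly from Lemma~\ref{lemma(a_n)*ineq}: multiplying the pointwise bound $a_n^* \le c\, n^{-1/p}\|a\|_{p,q_1}$ by $n^{1/p}$ and taking the supremum over $n$ gives $\|a\|_{p,\infty} \le c\,\|a\|_{p,q_1}$, and since $\frac1{q_1} - \frac1{q_2} = \frac1{q_1}$ when $q_2 = \infty$, the constant $c = (q_1/p)^{1/q_1}$ again matches case 1, while $c=1$ matches case 2. The embedding $\ell_{p,q_1}\hookrightarrow \ell_{p,q_2}$ then follows immediately from Definition~\ref{D:embedding} with $C$ equal to the displayed constant, once we note that the inequality is trivially valid (both sides zero or the finiteness transfers) when $a \equiv 0$.

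The one genuinely delicate point is making sure the exponent bookkeeping is airtight: one must verify the identity $\bigl(\tfrac{q_2}{p}-1\bigr) - \tfrac{q_2}{q_1}\bigl(\tfrac{q_1}{p}-1\bigr) = 1 - \tfrac{q_2}{q_1}$ and then that $(a_n^*)^{q_2-q_1} n^{1-\frac{q_2}{q_1}}$ is genuinely controlled by a constant times $\|a\|_{p,q_1}^{q_2-q_1}$ — this is where the hypothesis $q_1 < q_2$ enters (so that $q_2 - q_1 > 0$ and raising the pointwise bound to that power preserves the inequality) and where the two sub-cases $p<q_1$ versus $p\ge q_1$ split according to which part of Lemma~\ref{lemma(a_n)*ineq} applies. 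A secondary but routine check is the case $p = \infty$: then $q_1 < p$ automatically, so only case 1 can occur, $1/p = 0$, the weight $n^{\frac{q_i}{p}-1}$ reduces to $n^{-1}$, and the pointwise bound $a_n^* \le (q_1/p)^{1/q_1} n^{-1/p}\|a\|_{p,q_1}$ degenerates to $a_n^* \le 0^{1/q_1}\cdot\|a\|_{p,q_1}$ — so one should instead invoke the cruder bound $a_n^* \le \|a\|_{p,q_1}$ (which holds since $a_1^* = \sup_n n^{1/p}a_n^*\big|_{n=1}$-type reasoning or directly from $(a_n^*)^{q_1} n^{-1} \le \|a\|_{p,q_1}^{q_1}$ at $n=1$), giving the constant $1$; this should be flagged explicitly to avoid a vacuous constant.
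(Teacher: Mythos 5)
Your approach is exactly the one the paper uses: peel off $(a_n^*)^{q_2-q_1}$, control it by the pointwise decay estimate of Lemma~\ref{lemma(a_n)*ineq} (with constant $(q_1/p)^{1/q_1}$ or $1$ according to whether $p<q_1$ or $p\ge q_1$), and resum to recover $\|a\|_{p,q_1}^{q_1}$; the endpoint $q_2=\infty$ is handled by the same pointwise bound. The paragraph beginning ``Concretely'' is correct and reproduces the paper's computation, including the final constant $(q_1/p)^{1/q_1-1/q_2}$.

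However, two statements in your write-up are false as written and need repair. First, your displayed factorization does not balance: if you factor the summand as $\bigl((a_n^*)^{q_1}n^{\frac{q_1}{p}-1}\bigr)\cdot(a_n^*)^{q_2-q_1}\,n^{E}$, then matching powers of $n$ forces $E=\frac{q_2-q_1}{p}$, not $E=\bigl(\tfrac{q_2}{p}-1\bigr)-\tfrac{q_2}{q_1}\bigl(\tfrac{q_1}{p}-1\bigr)$; and the latter expression in fact evaluates to $\tfrac{q_2}{q_1}-1$, not to $1-\tfrac{q_2}{q_1}$ as you claim. Since you single out this exponent bookkeeping as ``the one genuinely delicate point,'' it is worth getting right: the correct mechanism is that applying $a_n^*\le c\,n^{-1/p}\|a\|_{p,q_1}$ to the factor $(a_n^*)^{q_2-q_1}$ produces $n^{-\frac{q_2-q_1}{p}}$, which combines with $n^{\frac{q_2}{p}-1}$ to give exactly $n^{\frac{q_1}{p}-1}$. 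Second, your remark on $p=\infty$ has the cases swapped: if $p=\infty$ then $q_1<q_2\le\infty$ forces $q_1<\infty=p$, so you are in case~2 ($p\ge q_1$), not case~1, and the constant there is $1$; the degenerate constant $(q_1/\infty)^{1/q_1}=0$ you worry about never arises, because case~1 requires $p<q_1$ and hence $p<\infty$. The bound $a_n^*\le n^{-1/p}\|a\|_{p,q_1}=\|a\|_{p,q_1}$ from part~2 of the lemma is precisely the ``cruder bound'' you propose, so no separate argument is needed. Neither error breaks the method, but both must be corrected before the proof is sound as stated.
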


\begin{proof} We will treat each case separately.
\begin{enumerate}[1.]
\item Let $p<q_{1}$ and $q_2\in (q_1,  \infty]$. We compute:
    \begin{flalign*}
    \| a \|_{p,q_{2}} & = \left\Vert n^{\frac{1}{p}-\frac{1}{q_2}} a_{n}^{*}\right\Vert_{q_2}\\
    & = \left\Vert n^{\frac{1}{p}-\frac{1}{q_2}} (a_{n}^{*})^{1-\frac{q_1}{q_2}}(a_{n}^{*})^{\frac{q_1}{q_2}}\right\Vert_{q_2}\\
    & \leq  \left\Vert \left(\frac{q_{1}}{p}\right)^{\frac{1}{q_{1}} -\frac{1}{q_{2}}}\| a \|_{p,q_{1}}^{1-\frac{q_{1}}{q_{2}}}n^{-\frac{1}{p}+\frac{q_1}{pq_{2}}}n^{\frac{1}{p}-\frac{1}{q_2}} (a_{n}^{*})^{\frac{q_1}{q_2}}\right\Vert_{q_2}\\
    & = \left(\frac{q_{1}}{p}\right)^{\frac{1}{q_{1}} -\frac{1}{q_{2}}}\| a \|_{p,q_{1}}^{1-\frac{q_{1}}{q_{2}}}\left\Vert n^{\frac{q_1}{pq_2}-\frac{1}{q_2}} (a_{n}^{*})^{\frac{q_1}{q_2}}\right\Vert_{q_2}\\
     & = \left(\frac{q_{1}}{p}\right)^{\frac{1}{q_{1}} -\frac{1}{q_{2}}}\| a \|_{p,q_{1}}^{1-\frac{q_{1}}{q_{2}}}\left\Vert n^{\frac{1}{p}-\frac{1}{q_1}} a_{n}^{*}\right\Vert_{q_1}^{\frac{q_1}{q_2}}\\
     & = \left(\frac{q_{1}}{p}\right)^{\frac{1}{q_{1}} -\frac{1}{q_{2}}}\| a \|_{p,q_{1}}.
    \end{flalign*}
    Here the first inequality again follows from the first part of Lemma~\ref{lemma(a_n)*ineq}. For the case $q_2 = \infty$ we apply the convention $1/q_{2} = 1/\infty = 0$.

\item Now, let $p\geq q_{1}$ and $q_2\in (q_1,  \infty]$. Then, the following inequality follows from the second part of Lemma~\ref{lemma(a_n)*ineq} and the computations are analogous to the first case:
\begin{flalign*}
    \| a \|_{p,q_{2}} & =
    \left\Vert n^{\frac{1}{p}-\frac{1}{q_2}} (a_{n}^{*})^{1-\frac{q_1}{q_2}}(a_{n}^{*})^{\frac{q_1}{q_2}}\right\Vert_{q_2}\\
    & \leq \| a \|_{p,q_{1}}^{1-\frac{q_{1}}{q_{2}}}\left\Vert n^{\frac{q_1}{pq_2}-\frac{1}{q_2}} (a_{n}^{*})^{\frac{q_1}{q_2}}\right\Vert_{q_2}\\
    & = \| a \|_{p,q_{1}}.
\end{flalign*}
\end{enumerate}
\end{proof}

We will now concentrate on establishing the embeddings $\ell_{p_{1},q_{1}} \hookrightarrow \ell_{p_{2},q_{2}}$ between the Lorentz sequence spaces in dependence on the values of $p_1, p_{2}$ and $q_{1}, q_{2}$.
Then we will try to calculate the exact value of the operator norms of these embeddings.

\begin{theorem}\label{lemma-inclusions-pq1,pq2}Let $0< p_{1} < p_{2} \leq \infty$ and $q_{1}, q_{2} \in (0,\infty]$. Then $$\ell^{p_{1},q_{1}} \hookrightarrow \ell^{p_{2},q_{2}},$$ 
and for all $ a \in \ell^{p_{1},q_{1}}$ holds:
\begin{enumerate}[1.]
    \item $\| a \|_{p_{2},q_{2}}\leq 
    \| a \|_{p_{1},q_{1}}$, 
    if $q_{1} \leq p_{1}$,
    \item $\| a \|_{p_{2},q_{2}}\leq 
      \left( \displaystyle\sum_{n=1}^{\infty} n^{\frac{q_{2}}{p_{2}} -\frac{q_{2}}{p_{1}}- 1}\right)^{1/q_{2}}\|a \|_{p_{1},\infty}$, 
     if $p_{1} < q_{1} = \infty$ and $q_{2}<\infty$,
    \item $\| a \|_{p_{2},q_{2}}\leq 
    \left( \frac{q_{1}}{p_{1}}\right)^{\frac{1}{q_{1}} - \frac{1}{q_{2}}}\| a \|_{p_{1},q_{1}}$, 
    if $p_{1} < q_{1}  < q_{2}\leq\infty$,
    \item $\| a \|_{p_{2},q_{2}}\leq 
    \| a \|_{p_{1},q_{1}}$, if $p_{1} < q_{1}< \infty$ and $q_{1} \geq q_{2}$.
    \item $\| a \|_{p_{2},q_{2}}\leq 
    \| a \|_{p_{1},q_{1}}$, if $q_{1}  = q_{2} = \infty$.
    
    \end{enumerate}
\end{theorem}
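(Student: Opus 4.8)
The plan is to reduce everything to the one-parameter inclusion $\ell_{p_1,q_1}\hookrightarrow\ell_{p_1,\infty}$ (which is Theorem~\ref{lemma-inclusion-p,p,q1,q2} with $q_2=\infty$), combined with a direct estimate of the $\|\cdot\|_{p_2,q_2}$ functional against $a_n^*$ using the pointwise bound $a_n^*\le n^{-1/p_1}\|a\|_{p_1,\infty}$. More precisely, for any $a\in\ell_{p_1,q_1}$ one has $a_n^*\le n^{-1/p_1}\|a\|_{p_1,\infty}$ directly from the definition of $\|\cdot\|_{p_1,\infty}$, so in the case $q_2<\infty$,
\begin{equation*}
\|a\|_{p_2,q_2}^{q_2}=\sum_{n=1}^\infty n^{\frac{q_2}{p_2}-1}(a_n^*)^{q_2}\le\|a\|_{p_1,\infty}^{q_2}\sum_{n=1}^\infty n^{\frac{q_2}{p_2}-\frac{q_2}{p_1}-1},
\end{equation*}
and the exponent $\frac{q_2}{p_2}-\frac{q_2}{p_1}-1<-1$ since $p_1<p_2$, so the series converges; a completely analogous supremum estimate handles $q_2=\infty$. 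This already proves that $\ell_{p_1,\infty}\hookrightarrow\ell_{p_2,q_2}$, and hence, chaining with $\ell_{p_1,q_1}\hookrightarrow\ell_{p_1,\infty}$, gives $\ell_{p_1,q_1}\hookrightarrow\ell_{p_2,q_2}$ in all the listed cases.

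To get the stated constants I would split according to the cases exactly as in the statement. In case~1 ($q_1\le p_1$), the inclusion $\ell_{p_1,q_1}\hookrightarrow\ell_{p_1,\infty}$ is norm-nonincreasing by Theorem~\ref{lemma-inclusion-p,p,q1,q2}(2); but I would argue more efficiently by observing that $q_1\le p_1\le p_2$ puts us in the ``small $q$'' regime where the cleanest route is to use Lemma~\ref{lemma(a_n)*ineq}(2) with parameters $(p_1,q_1)$, giving $a_n^*\le n^{-1/p_1}\|a\|_{p_1,q_1}$, and then run the displayed computation above with $\|a\|_{p_1,\infty}$ replaced by $\|a\|_{p_1,q_1}$; since the tail series $\sum n^{q_2/p_2-q_2/p_1-1}$ is bounded by a constant, one still needs to check this constant is $\le 1$. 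This is the point where I expect the bookkeeping to be delicate, so I would instead prefer the telescoping trick from the proof of Lemma~\ref{lemma(a_n)*ineq}: bound $\sum_{n=1}^\infty n^{\frac{q_2}{p_2}-\frac{q_2}{p_1}-1}$ from above using the concavity/convexity of $t\mapsto t^{q_2/p_2-q_2/p_1}$ to collapse the sum, or simply absorb it into the already-established case analysis of Theorem~\ref{lemma-inclusion-p,p,q1,q2} by factoring the embedding as $\ell_{p_1,q_1}\hookrightarrow\ell_{p_1,p_1}=\ell_{p_1}\hookrightarrow\ell_{p_2}=\ell_{p_2,p_2}\hookrightarrow\ell_{p_2,q_2}$ and tracking constants through each arrow. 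Cases~4 and~5 ($q_1\ge q_2$, resp. $q_1=q_2=\infty$) likewise factor as $\ell_{p_1,q_1}\hookrightarrow\ell_{p_2,q_1}\hookrightarrow\ell_{p_2,q_2}$ where the first arrow is the new ``$p$-shift'' estimate and the second is Theorem~\ref{lemma-inclusion-p,p,q1,q2}(1 or 2) with constant $\le 1$.

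For cases~2 and~3, which carry the genuinely nontrivial constants, I would be more careful. In case~2 ($p_1<q_1=\infty$, $q_2<\infty$) the estimate is exactly the displayed one with $\|a\|_{p_1,\infty}$ on the right, and the constant $\left(\sum_{n=1}^\infty n^{\frac{q_2}{p_2}-\frac{q_2}{p_1}-1}\right)^{1/q_2}$ is stated verbatim, so nothing further is needed. In case~3 ($p_1<q_1<q_2\le\infty$) I would factor as $\ell_{p_1,q_1}\hookrightarrow\ell_{p_1,q_2}\hookrightarrow\ell_{p_2,q_2}$: the first arrow has constant $\left(\frac{q_1}{p_1}\right)^{\frac1{q_1}-\frac1{q_2}}$ by Theorem~\ref{lemma-inclusion-p,p,q1,q2}(1), and the second arrow must be shown to have constant $\le 1$. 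That last fact is the crux — it says $\|a\|_{p_2,q_2}\le\|a\|_{p_1,q_2}$ whenever $p_1<p_2$ — and it should follow from the monotonicity $n^{q_2/p_2-1}\le n^{q_2/p_1-1}$ term by term (valid since $p_1<p_2$ and hence $q_2/p_2<q_2/p_1$) when $q_2<\infty$, with the $q_2=\infty$ version being immediate from $n^{1/p_2}\le n^{1/p_1}$. The main obstacle overall is therefore not any single hard inequality but the care needed to verify, in each regime, that the multiplicative constants coming out of the chained embeddings do not exceed the sharp values claimed in items~1--5; this is where I would spend the bulk of the write-up.
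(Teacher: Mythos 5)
Your handling of cases 2, 3 and 5 is sound. In particular, the termwise observation that $n^{q_2/p_2-1}\le n^{q_2/p_1-1}$ for $p_1\le p_2$, which gives $\|a\|_{p_2,q_2}\le\|a\|_{p_1,q_2}$ with constant exactly $1$ at fixed second index, is a clean ``$p$-shift'' that the paper does not isolate; combined with Theorem~\ref{lemma-inclusion-p,p,q1,q2}(1) it makes your factorization for case 3 work and is arguably tidier than the paper's direct computation there. The genuine gaps are in cases 1 and 4, and both have the same source: when $q_2<q_1$ you cannot decouple the increase of $p$ from the decrease of $q$. In case 1 your primary route bounds $(a_n^*)^{q_2}$ entirely by $\bigl(n^{-1/p_1}\|a\|_{p_1,q_1}\bigr)^{q_2}$ and produces the constant $\bigl(\sum_{n}n^{q_2/p_2-q_2/p_1-1}\bigr)^{1/q_2}$; this series has first term equal to $1$ and all remaining terms positive, so the constant is provably strictly greater than $1$ and no telescoping estimate of the sum can bring it down to the claimed value $1$. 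Your fallback factorization $\ell_{p_1,q_1}\hookrightarrow\ell_{p_1}\hookrightarrow\ell_{p_2}\hookrightarrow\ell_{p_2,q_2}$ breaks at the last arrow whenever $q_2<p_2$, since $\ell_{p_2,p_2}\hookrightarrow\ell_{p_2,q_2}$ then points the wrong way ($\ell^{p,q}$ grows with $q$). The same wrong-direction arrow sinks case 4: in $\ell_{p_1,q_1}\hookrightarrow\ell_{p_2,q_1}\hookrightarrow\ell_{p_2,q_2}$ with $q_1>q_2$ the second map is not an embedding at all, and Theorem~\ref{lemma-inclusion-p,p,q1,q2}, which requires $q_1<q_2$, cannot supply it.

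The mechanism you are missing, used by the paper in cases 1, 3 and 4, is to split $(a_n^*)^{q_2}=(a_n^*)^{q_2-q_1}(a_n^*)^{q_1}$, apply the pointwise bound of Lemma~\ref{lemma(a_n)*ineq} only to the excess factor $(a_n^*)^{q_2-q_1}$, and reassemble $\|a\|_{p_1,q_1}^{q_1}$ from the surviving sum $\sum_n n^{q_1/p_1-1}(a_n^*)^{q_1}$. The residual weight is then $n^{q_2(1/p_2-1/p_1)}\le1$, so no series of positive terms is ever summed and the constant comes out as $1$ in case 1 and as $(q_1/p_1)^{q_2/q_1-1}$ in case 3; the paper then reads case 4 off the case-3 computation by noting $(q_1/p_1)^{1/q_1-1/q_2}\le1$ when $q_1\ge q_2$. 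Be aware that this splitting itself presupposes $q_2\ge q_1$ (otherwise the exponent $q_2-q_1$ is negative and the pointwise bound is applied in the wrong direction), a restriction the paper's write-up of cases 1 and 4 passes over silently; for instance $a=(1,1,0,\dots)$ with $p_1=q_1=2$, $p_2=4$, $q_2=1$ gives $\|a\|_{4,1}=1+2^{-3/4}>\sqrt2=\|a\|_{2,2}$, so the constant-$1$ claim of case 1 cannot hold for $q_2<q_1$. In any event, your proposal contains no substitute for this splitting, and without it the sharp constants in cases 1 and 4 are out of reach.
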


\begin{proof} Let $0<p_{1} < p_{2} \leq \infty$. We will prove the individual cases.
\begin{enumerate}[1.]
\item Let $q_{1} \leq p_{1} < \infty$ and  $q_{2} \in (0,\infty]$, then
    \begin{flalign*}
     \| a \|_{p_2,q_{2}} & =
    \left\Vert n^{\frac{1}{p_2}-\frac{1}{q_2}} (a_{n}^{*})^{1-\frac{q_1}{q_2}}(a_{n}^{*})^{\frac{q_1}{q_2}}\right\Vert_{q_2}\\
    & \leq \| a \|_{p_1,q_{1}}^{1-\frac{q_{1}}{q_{2}}}\left\Vert n^{\frac{1}{p_2}-\frac{1}{p_1}}n^{\frac{q_1}{p_1q_2}-\frac{1}{q_2}} (a_{n}^{*})^{\frac{q_1}{q_2}}\right\Vert_{q_2}\\
    & \leq \| a \|_{p_1,q_{1}}^{1-\frac{q_{1}}{q_{2}}}\left\Vert n^{\frac{q_1}{p_1q_2}-\frac{1}{q_2}} (a_{n}^{*})^{\frac{q_1}{q_2}}\right\Vert_{q_2}\\
    & = \| a \|_{p_1,q_{1}}^{1-\frac{q_{1}}{q_{2}}}\left\Vert n^{\frac{1}{p_1}-\frac{1}{q_1}} a_{n}^{*}\right\Vert_{q_1}^{\frac{q_1}{q_2}}\\
    & = \| a \|_{p,q_{1}}.
    \end{flalign*}
Here the first inequality follows from the second part of Lemma~\ref{lemma(a_n)*ineq}
$$a_{n}^{*} \leq n^{-\frac{1}{p_{1}}}\| a \|_{p_{1},q_{1}},$$
and we get the second inequality from the fact that
$$n^{\frac{1}{p_{2}} -\frac{1}{p_{1}}}\leq1.$$
\end{enumerate}

\begin{enumerate}[2.]
    \item Let $p_{1} < q_{1} = \infty$ and $q_{2} < \infty$, then:
    \begin{flalign*}
    \|a \|_{p_{2},q_{2}}^{q_{2}}
    & =\displaystyle\sum_{n=1}^{\infty} n^{\frac{q_{2}}{p_{2}} - 1}(a_{n}^{*})^{q_{2}}\\
    & = \displaystyle\sum_{n=1}^{\infty} n^{\frac{q_{2}}{p_{2}} -\frac{q_{2}}{p_{1}}-1}(a_{n}^{*})^{q_{2}}n^{\frac{q_{2}}{p_{1}}}\\
    & \leq \left(\displaystyle\sup_{n\in\mathbb{N}}a_{n}^{*}n^{\frac{1}{p_{1}}}\right)^{q_{2}} \displaystyle\sum_{n=1}^{\infty} n^{\frac{q_{2}}{p_{2}} -\frac{q_{2}}{p_{1}}-1} \\
    & =\|a \|_{p_{1},\infty}^{q_{2}}\displaystyle\sum_{n=1}^{\infty} n^{\frac{q_{2}}{p_{2}} -\frac{q_{2}}{p_{1}} -1},
    \end{flalign*}

in which the sum $\displaystyle\sum_{n=1}^{\infty} n^{\frac{q_{2}}{p_{2}} -\frac{q_{2}}{p_{1}} -1}$ converges, because $\frac{q_{2}}{p_{2}} -\frac{q_{2}}{p_{1}} -1 <-1$.
\end{enumerate}

\begin{enumerate}[3.]
    \item Let $p_{1} < q_{1} < \infty$ and $q_{2} < \infty$, then:
    \begin{flalign*}
    \| a \|_{p_{2},q_{2}}^{q_{2}}
    & =\displaystyle\sum_{n=1}^{\infty} n^{\frac{q_{2}}{p_{2}} - 1}(a_{n}^{*})^{q_{2}}\\
    & = \displaystyle\sum_{n=1}^{\infty} n^{\frac{q_{2}}{p_{2}} - 1}(a_{n}^{*})^{q_{2}-q_{1}}(a_{n}^{*})^{q_{1}}\\
    & \leq \| a \|_{p_{1},q_{1}}^{q_{2}-q_{1}}\left( \frac{q_{1}}{p_{1}}\right)^{\frac{q_{2}-q_{1}}{q_{1}}}\displaystyle\sum_{n=1}^{\infty} n^{-\frac{q_{2}-q_{1}}{p_{1}}}(a_{n}^{*})^{q_{1}}n^{\frac{q_{2}}{p_{2}} - 1}\\
    & =\| a \|_{p_{1},q_{1}}^{q_{2}-q_{1}}\left( \frac{q_{1}}{p_{1}}\right)^{\frac{q_{2}}{q_{1}}-1}\displaystyle\sum_{n=1}^{\infty} n^{\frac{q_{2}}{p_{2}} -\frac{q_{2}}{p_{1}}}(a_{n}^{*})^{q_{1}}n^{\frac{q_{1}}{p_{1}} - 1}\\
    & \leq \left( \frac{q_{1}}{p_{1}}\right)^{\frac{q_{2}}{q_{1}}-1}\| a \|_{p_{1},q_{1}}^{q_{2}}.
    \end{flalign*}

Here the first inequality follows from the first part of Lemma~\ref{lemma(a_n)*ineq}
$$a_{n}^{*} \leq \left( \frac{q_{1}}{p_{1}}\right)^{\frac{1}{q_{1}}}n^{-\frac{1}{p_{1}}}\| a \|_{p_{1},q_{1}}$$
and we get the second inequality  from the fact that
$$n^{\frac{q_{2}}{p_{2}} -\frac{q_{2}}{p_{1}}} \leq 1.$$
\end{enumerate}

\begin{enumerate}[4.]
    \item Let $p_{1} < q_{1} < \infty$ and $q_{1} \geq q_{2}$, then from the computations in the 3. part of the proof we have:
    $$\| a \|_{p_{2},q_{2}}\leq 
    \left( \frac{q_{1}}{p_{1}}\right)^{\frac{1}{q_{1}} - \frac{1}{q_{2}}}\| a \|_{p_{1},q_{1}},$$

where $\frac{q_{1}}{p_{1}} > 1$ because $p_{1} < q_{1} < \infty$. 
But $\frac{1}{q_{1}} - \frac{1}{q_{2}} \leq 0$ because $q_{1} \geq q_{2}$.
So $\left( \frac{q_{1}}{p_{1}}\right)^{\frac{1}{q_{1}} - \frac{1}{q_{2}}} \leq 1$ and $\| a \|_{p_{2},q_{2}}\leq \| a \|_{p_{1},q_{1}}$.
\end{enumerate}

\begin{enumerate}[5.]
    \item Let $q_{1}  =  q_{2} = \infty$, then:
    \begin{flalign*}
    \|a \|_{p_{2},\infty}
    & =\displaystyle\sup_{n\in\mathbb{N}}a_{n}^{*}n^{\frac{1}{p_{2}}}\\
    &\leq \displaystyle\sup_{n\in \mathbb{N}}n^{\frac{1}{p_{1}}}a_{n}^{*}\\
    & = \| a \|_{p_{1},q_{1}},
    \end{flalign*}

where the inequality follows from $p_{1}<p_{2}$, hence $n^{\frac{1}{p_{1}}}\geq n^{\frac{1}{p_{2}}}$.
\end{enumerate}
\end{proof}

\begin{theorem}\label{lemma ||I||-p12,q12} Let $0< p_{1} \leq p_{2} \leq \infty$ and $q_{1},q_{2} \in (0,\infty]$. If $p_1 = p_2$, let us also assume $q_1 <q_2$.
Then the norm of the embedding operator $I\colon \ell^{p_{1},q_{1}} \rightarrow \ell^{p_{2},q_{2}}$ is equal to:
\begin{enumerate}[1.]
    \item $\|I\| = 1$, if $q_{1} \leq p_{1}$,
    \item $\|I\| =  1$, if $p_{1} < q_{1}<\infty$, $q_{1} \geq q_{2}$ and $p_{1} < p_{2}$,
    \item $\|I\| =  1$, if $q_{1}  =  q_{2} = \infty$ and $p_{1} < p_{2}$,
    \item $\|I\| = \left( \displaystyle\sum_{n=1}^{\infty} n^{\frac{q_{2}}{p_{2}} -\frac{q_{2}}{p_{1}}- 1}\right)^{1/q_{2}}$, if $p_{1} < q_{1} = \infty$, $q_{2} < \infty$ and $p_{1} < p_{2}$.
   
   \end{enumerate}
\end{theorem}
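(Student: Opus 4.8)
The plan is to obtain $\|I\|$ as the common value of an upper bound that is already in hand and a lower bound produced by testing $I$ on an explicit sequence. The upper bounds require nothing new: in cases~1--3 the pertinent parts of Theorems~\ref{lemma-inclusion-p,p,q1,q2} and~\ref{lemma-inclusions-pq1,pq2} give $\|a\|_{p_2,q_2}\le\|a\|_{p_1,q_1}$ for every $a$ in the domain, hence $\|I\|\le 1$; when $p_1=p_2$, which is only possible in case~1, boundedness of $I$ forces $q_1\le q_2$ and one applies part~2 of Theorem~\ref{lemma-inclusion-p,p,q1,q2} using $p_1\ge q_1$. In case~4, part~2 of Theorem~\ref{lemma-inclusions-pq1,pq2} gives directly $\|I\|\le C:=\bigl(\sum_{n=1}^{\infty}n^{q_2/p_2-q_2/p_1-1}\bigr)^{1/q_2}$, the series converging since $p_1<p_2$ makes the exponent strictly below $-1$. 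So the whole content is the reverse inequalities.

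For cases~1--3 I would test $I$ on the single basis vector $e^{1}=(1,0,0,\dots)$. Its decreasing rearrangement is $e^{1}$ itself, so the definition of the Lorentz functional collapses to a single summand (or a single term in the supremum) and yields $\|e^{1}\|_{p,q}=1$ for every $p\in(0,\infty]$ and $q\in(0,\infty]$. In particular $\|e^{1}\|_{p_1,q_1}=\|e^{1}\|_{p_2,q_2}=1$, so $\|I\|\ge\|e^{1}\|_{p_2,q_2}=1$, and combined with the upper bound $\|I\|=1$.

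Case~4 is the only one with genuine content, and the idea is to choose a sequence for which the defining supremum of $\|\cdot\|_{p_1,\infty}$ is attained at \emph{every} index, i.e.\ for which $n^{1/p_1}a_n^{*}$ is constant in $n$. Take $a=\{n^{-1/p_1}\}_{n=1}^{\infty}$; it is non-increasing, so $a^{*}=a$, and $\|a\|_{p_1,\infty}=\sup_{n}n^{1/p_1}n^{-1/p_1}=1$, while $\|a\|_{p_2,q_2}^{q_2}=\sum_{n=1}^{\infty}n^{q_2/p_2-1}\bigl(n^{-1/p_1}\bigr)^{q_2}=\sum_{n=1}^{\infty}n^{q_2/p_2-q_2/p_1-1}=C^{q_2}$, which is finite (so $a\in\ell_{p_2,q_2}$) by the same convergence observation as above. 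Hence $\|I\|\ge\|a\|_{p_2,q_2}=C$, matching the upper bound, so $\|I\|=C$; note that here the supremum defining $\|I\|$ is in fact attained.

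The only step that is not pure bookkeeping is finding the extremal sequence $\{n^{-1/p_1}\}$ in case~4; once that is spotted the computation is immediate and, pleasantly, no limiting or approximation argument is needed. Everything else is assembling the case distinctions and quoting the inequalities already proved in Theorems~\ref{lemma-inclusion-p,p,q1,q2} and~\ref{lemma-inclusions-pq1,pq2}.
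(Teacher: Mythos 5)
Your proposal is correct and follows essentially the same route as the paper: the upper bounds are quoted from Theorems~\ref{lemma-inclusion-p,p,q1,q2} and~\ref{lemma-inclusions-pq1,pq2}, the lower bound in cases~1--3 is obtained by testing on a standard basis vector (the paper uses an arbitrary $e^{j}$, whose rearrangement is $e^{1}$, so this is the same computation), and in case~4 you use exactly the paper's extremal sequence $\{n^{-1/p_{1}}\}_{n=1}^{\infty}$, for which the norm is attained rather than merely approximated.
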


\begin{proof}We will prove the individual cases separately:
\begin{enumerate}[1.]
\item Let $q_{1} \leq p_{1}$. We get the upper estimate of the operator norm for and $p_{1} < p_{2}$ from the first part of Theorem~\ref{lemma-inclusions-pq1,pq2} and for and $p_{1} = p_{2}$ from the second part of Lemma~\ref{lemma-inclusion-p,p,q1,q2}. So $\|I\| \leq 1$.
 Now we will show that $I$ attains $1$ on the closed unit ball.
 For arbitrary $e^{j}$ we have that $(e^{j})^{*} = e^{1}$ and:
 $$\|e^{j}\|_{p_{1},q_{1}} =\left(\displaystyle\sum_{n=1}^{\infty} n^{\frac{q_{1}}{p_{1}} - 1}((e_{n}^{j})^{*})^{q_{1}}\right)^{\frac{1}{q_{1}}} = \left(1^{\frac{q_{1}}{p_{1}} - 1}1^{q_{1}}\right)^{\frac{1}{q_{1}}} =  1.$$ 
 So $e^{j} \in B_{\ell^{p_{1},q_{1}}}$. Now we need to show that also $\|I(e^{j})\|_{p_{2},q_{2}} = \|e^{j}\|_{p_{2},q_{2}} = 1$.
\begin{enumerate}[a)]
 \item Let $q_{2} < \infty$. Then $\|e^{j}\|_{p_{2},q_{2}} = 1$ follows from the computations above.
 \item Let $q_{2} = \infty$. Then $\|e^{j}\|_{p_{2},q_{2}} = \displaystyle\sup_{n\in\mathbb{N}}((e_{n}^{j})^{*})n^{\frac{1}{p_{2}}} = 1$
 \end{enumerate}
 It already follows that $\|I\| = 1$.

\item Let $p_{1} < q_{1} < \infty$ and $q_{1} \geq q_{2}$. We get the upper estimate for the operator norm from the fourth part of Theorem~\ref{lemma-inclusions-pq1,pq2}. So $\|I\| \leq 1$.
We can show that $I$ attains $1$ on the closed unit ball at an arbitrary $e^{j}$. The computations are same as in the proof of 1.
So again $\|I\| = 1$.

\item Let $q_{1}  =  q_{2} = \infty$. We get the upper estimate for the operator norm from the fifth part of Theorem~\ref{lemma-inclusions-pq1,pq2}. So $\|I\| \leq 1$.
We can show that $I$ attains $1$ on the closed unit ball at an arbitrary $e^{j}$. The computations are similar as in the proof of 1.
$$\|e^{j}\|_{p_{1},q_{1}} = \displaystyle\sup_{n\in\mathbb{N}}((e_{n}^{j})^{*})n^{\frac{1}{p_{1}}} = 1 = \|e^{j}\|_{p_{2},q_{2}}$$

So again $\|I\| = 1$.

\item Let $p_{1} < q_{1} = \infty$ and $q_{2} < \infty$. Once again we get the upper estimate for the operator norm from the second part of Theorem~\ref{lemma-inclusions-pq1,pq2}.\\
Therefore $\|I\| \leq \left( \displaystyle\sum_{n=1}^{\infty} n^{\frac{q_{2}}{p_{2}} -\frac{q_{2}}{p_{1}}- 1}\right)^{1/q_{2}}$. Now we will again show that $I$ achieves  this upper estimate on the closed unit ball.
Let us consider the sequence \\$ a = a^{*} = (1, 2^{-\frac{1}{p_{1}}}, 3^{-\frac{1}{p_{1}}},\dots, n^{-\frac{1}{p_{1}}}, \dots )$, then
$$\|a\|_{p_{1},\infty} = \displaystyle\sup_{n\in\mathbb{N}}a_{n}^{*}n^{\frac{1}{p_{1}}} = \displaystyle\sup_{n\in\mathbb{N}}n^{-\frac{1}{p_{1}}}n^{\frac{1}{p_{1}}} = 1.$$
Hence $a\in B_{\ell^{p_{1},\infty}}$ and we will now compute the norm of $\|I(a)\|_{p_{2},q_{2}} = ||a||_{p_{2},q_{2}}$.
\begin{flalign*}
\|a\|_{p_{2},q_{2}} 
&=\left(\displaystyle\sum_{n=1}^{\infty} n^{\frac{q_{2}}{p_{2}} - 1}(a_{n}^{*})^{q_{2}}\right)^{1/q_{2}} \\
&= \left(\displaystyle\sum_{n=1}^{\infty} n^{\frac{q_{2}}{p_{2}} - 1}(n^{-\frac{1}{p_{1}}})^{q_{2}}\right)^{1/q_{2}} \\
&= \left(\displaystyle\sum_{n=1}^{\infty} n^{\frac{q_{2}}{p_{2}} -\frac{q_{2}}{p_{1}}- 1}\right)^{1/q_{2}}.
\end{flalign*}
So indeed $I$ achieves its upper estimate on the closed unit ball and 4. is proven. 
\end{enumerate}

\end{proof}

In the case 3 of Theorem~\ref{lemma-inclusions-pq1,pq2}, we do not know whether the constant is optimal in full generality, except in the case treated in the next result. 

\begin{theorem}\label{Lemma:Riemann}Let $0 < p < q <\infty$, then the norm of the embedding operator $I\colon \ell^{p,q} \rightarrow \ell^{p,\infty}$ is: $$\|I\| = \left( \frac{q}{p}\right)^{\frac{1}{q}}.$$  
\end{theorem}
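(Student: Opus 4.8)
The plan is to read off the upper bound from the inclusion theorem already proved and to obtain the matching lower bound by testing the operator on truncated constant sequences.

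\emph{Upper bound.} First I would apply Theorem~\ref{lemma-inclusion-p,p,q1,q2} with $q_{1}=q$ and $q_{2}=\infty$. Since $p<q=q_{1}$, its first case (with the convention $1/\infty=0$) gives, for every $a\in\ell_{p,q}$,
\[
\|a\|_{p,\infty}\le\Bigl(\tfrac{q}{p}\Bigr)^{\frac1q-\frac1\infty}\|a\|_{p,q}=\Bigl(\tfrac{q}{p}\Bigr)^{\frac1q}\|a\|_{p,q},
\]
hence $\|I\|\le(q/p)^{1/q}$.

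\emph{Lower bound.} For the reverse inequality I would, for each $N\in\mathbb{N}$, test $I$ on the truncated constant sequence $a^{(N)}=\sum_{n=1}^{N}e^{n}=(\,\underbrace{1,\dots,1}_{N},0,0,\dots)$. This sequence is already nonincreasing, so $(a^{(N)})^{*}=a^{(N)}$, and one computes directly $\|a^{(N)}\|_{p,\infty}=\sup_{1\le n\le N}n^{1/p}=N^{1/p}$ and $\|a^{(N)}\|_{p,q}=\bigl(\sum_{n=1}^{N}n^{q/p-1}\bigr)^{1/q}$. Normalising, $a^{(N)}/\|a^{(N)}\|_{p,q}\in B_{\ell_{p,q}}$, so
\[
\|I\|\ \ge\ \frac{N^{1/p}}{\bigl(\sum_{n=1}^{N}n^{q/p-1}\bigr)^{1/q}}.
\]
To finish I would bound the sum from above: since $p<q$, the function $t\mapsto t^{q/p-1}$ is increasing, whence $\sum_{n=1}^{N}n^{q/p-1}\le\int_{1}^{N+1}t^{q/p-1}\,\mathrm{d}t<\frac{p}{q}(N+1)^{q/p}$ (equivalently, this is the telescoping Lagrange mean value estimate dual to the one used in Lemma~\ref{lemma(a_n)*ineq}). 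Substituting gives $\|I\|\ge(q/p)^{1/q}\bigl(N/(N+1)\bigr)^{1/p}$, and letting $N\to\infty$ yields $\|I\|\ge(q/p)^{1/q}$. Combined with the upper bound this proves $\|I\|=(q/p)^{1/q}$.

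\emph{Main obstacle.} The one genuinely delicate point, in contrast with the cases of Theorem~\ref{lemma ||I||-p12,q12}, is that the operator norm is \emph{not attained} here: the natural extremiser $a^{*}=(n^{-1/p})_{n}$ has $\|a\|_{p,q}^{q}=\sum_{n}n^{-1}=\infty$, so it lies outside $\ell_{p,q}$; moreover the obvious bounded substitutes (truncations of $(n^{-1/p})_{n}$) have $\ell_{p,\infty}$-to-$\ell_{p,q}$ ratio tending to $0$, not to $(q/p)^{1/q}$. Thus the crux is selecting the right test family and pairing the upper estimate $\sum_{n=1}^{N}n^{q/p-1}<\frac{p}{q}(N+1)^{q/p}$ with the lower estimate $\sum_{n=1}^{N}n^{q/p-1}\ge\frac{p}{q}N^{q/p}$ (already contained in the proof of Lemma~\ref{lemma(a_n)*ineq}) so that the ratio is squeezed to the sharp constant $(q/p)^{1/q}$ in the limit.
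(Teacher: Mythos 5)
Your proposal is correct and follows essentially the same route as the paper: the upper bound is read off from Theorem~\ref{lemma-inclusion-p,p,q1,q2}, and the lower bound is obtained by testing on the same truncated constant sequences $(1,\dots,1,0,\dots)$. The only (cosmetic) difference is in the final step, where you squeeze the partial sums between $\frac{p}{q}N^{q/p}$ and $\frac{p}{q}(N+1)^{q/p}$ by integral comparison, whereas the paper evaluates $\lim_{n\to\infty}\|a^{n}\|_{p,\infty}/\|a^{n}\|_{p,q}$ directly as a Riemann sum; both yield the sharp constant $(q/p)^{1/q}$.
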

\begin{proof}
$\|I\| \leq \left( \frac{q}{p}\right)^{\frac{1}{q}}$ from the first part of Lemma~\ref{lemma-inclusion-p,p,q1,q2}.\\
We want to show that there exists a sequence $\{a^{n}\}_{n=1}^{\infty}$ of nonzero elements, where $a^{n}\in \ell^{p,q}$ for all $n \in \mathbb{N}$, such that:
\begin{equation}\label{eq:lim-rieman}
    \displaystyle\lim_{n\rightarrow \infty}\frac{\|a^{n}\|_{p,\infty}}{\|a^{n}\|_{p,q}} =\left(\frac{q}{p}\right)^{\frac{1}{q_{1}}}.
\end{equation}
This would already imply that $\|I\| = \left(\frac{q}{p}\right)^{\frac{1}{q}}$, since we would be able to get arbitrarily close to our upper estimate
$\left( \frac{q}{p}\right)^{\frac{1}{q}}$ of the operator norm $\|I\|$.

For every $n\in \mathbb{N}$ set $a^{n} = (1, 1, 1,\dots, 1, 0, 0\dots)$, where $a_{i}^{n} =  1$ for $i \in \{1,2,\dots,n\}$ and
$a_{i}^{n} = 0 $ for $i > n$. Then for every $n\in \mathbb{N}$:
$$ \|a^{n}\|_{p,\infty} = \sup_{i \leq n} i^{\frac{1}{p}} = n^{\frac{1}{p}},
$$
while
$$ \|a^{n}\|_{p,q} = \left(\sum_{i = 1}^{n} i^{\frac{q}{p} - 1} \right)^{\frac{1}{q}}.$$
Now, from the theory of the Riemann integral, we know
\begin{flalign*}
    \displaystyle\lim_{n\rightarrow \infty}\frac{\|a^{n}\|_{p,\infty}}{\|a^{n}\|_{p,q}} 
    & =\displaystyle\lim_{n\rightarrow \infty}\frac{n^{\frac{1}{p}}}{\displaystyle\left(\sum_{i = 1}^{n} i^{\frac{q}{p} - 1} \right)^{\frac{1}{q}}}\\
    & =\displaystyle\frac{1}{\displaystyle\left(\lim_{n\rightarrow \infty}\sum_{i = 1}^{n} \left(\frac{i}{n}\right)^{\frac{q}{p} - 1}\frac{1}{n} \right)^{\frac{1}{q}}}\\
    & = \displaystyle\left(\int_{0}^{1} x^{\frac{q}{p} - 1}dx \right)^{-\frac{1}{q}}\\
    & = \left(\frac{q}{p}\right)^{\frac{1}{q}}.
\end{flalign*}
Hence $\{a^{n}\}_{n=1}^{\infty}$ satisfies \eqref{eq:lim-rieman} and we proved  $\|I\| = \left( \frac{q}{p}\right)^{\frac{1}{q}}$.
\end{proof}

\section{Examples}\label{S:examples}

We shall now present examples of embeddings between Lorentz sequence spaces which illustrate Theorem ~\ref{THM:MN-c_0 subspaces}. 

\begin{proposition} 
\label{ex01:pq-I-c_0} Let $p,q \in(0,\infty]$ and $\min\{p,q\}<\infty$. Then the embedding operator $I\colon \ell^{p,q} \rightarrow c_{0}$ is maximally non-compact.
\end{proposition}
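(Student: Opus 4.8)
The plan is to verify the three hypotheses of Proposition~\ref{prop-MN-||I||<1} (or equivalently, Theorem~\ref{THM:MN-c_0 subspaces}) for the embedding $I\colon \ell^{p,q}\to c_0$ and conclude immediately. By Lemma~\ref{lorentz subset c_0} we already have $\ell^{p,q}\subset c_0$ under the assumption $\min\{p,q\}<\infty$, and of course $c_0\subset c_0$, so condition (1) of Proposition~\ref{prop-MN-||I||<1} holds. For condition (2), the target space is $c_0$ equipped with the supremum norm, so trivially $|x_k|\le\|x\|_\infty=\|x\|_{c_0}$ for every $x\in c_0$ and every $k$. It remains to check condition (3): that the basis vectors $e^j$ lie in $B_{\ell^{p,q}}$ and that $0<\|I\|\le 1$. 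A direct computation gives $(e^j)^*=e^1$, whence for $q<\infty$ one has $\|e^j\|_{p,q}=\bigl(\sum_{n\ge 1} n^{q/p-1}((e^1_n))^q\bigr)^{1/q}=(1^{q/p-1}\cdot 1)^{1/q}=1$, and for $q=\infty$ one has $\|e^j\|_{p,\infty}=\sup_n n^{1/p}(e^1_n)^*=1$; so $e^j\in B_{\ell^{p,q}}$ for every $j$.

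The one genuinely substantive point is the bound $\|I\|\le 1$, i.e. that $\|a\|_\infty=\|a\|_{\infty,\infty}\le\|a\|_{p,q}$ for every $a\in\ell^{p,q}$ with $\min\{p,q\}<\infty$. I would argue as follows. Since $\|a\|_\infty=a_1^*$, it suffices to show $a_1^*\le\|a\|_{p,q}$. If $q<\infty$, then $\|a\|_{p,q}^q=\sum_{n\ge 1}(a_n^*)^q n^{q/p-1}\ge (a_1^*)^q\cdot 1^{q/p-1}=(a_1^*)^q$, using that every summand is nonnegative and isolating the $n=1$ term. If $q=\infty$ (so $p<\infty$), then $\|a\|_{p,\infty}=\sup_n n^{1/p}a_n^*\ge 1^{1/p}a_1^*=a_1^*$. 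Either way $\|a\|_\infty\le\|a\|_{p,q}$, so $\|I\|\le 1$; and since $\|I(e^1)\|_\infty=1=\|e^1\|_{p,q}$, in fact $\|I\|=1>0$. This also shows $I$ attains its norm and pins down $\|I\|=1$, which is consistent with the chosen normalization. Thus all hypotheses of Proposition~\ref{prop-MN-||I||<1} are met and $I$ is maximally non-compact. (Alternatively, one could invoke Theorem~\ref{THM:MN-c_0 subspaces}: $\ell^{p,q}$ is a rearrangement-invariant lattice by the remark following its definition, $c_0$ is clearly one as well, $\ell^{p,q}\hookrightarrow c_0$ by Lemma~\ref{lorentz subset c_0} together with the norm bound just proved, and $0<\|I\|<\infty$; the conclusion follows.)

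I do not anticipate a serious obstacle here: the result is essentially a corollary of the general machinery already established, and the only computation of substance — the estimate $\|I\|\le 1$ — is a one-line isolation-of-a-term argument in each of the two regimes $q<\infty$ and $q=\infty$. The mild subtlety worth stating carefully is why we need $\min\{p,q\}<\infty$ at all: it is precisely the hypothesis of Lemma~\ref{lorentz subset c_0} guaranteeing $\ell^{p,q}\subset c_0$, which is condition (1); without it (the case $p=q=\infty$, i.e. $\ell^\infty$) the embedding into $c_0$ does not even make sense. I would flag this explicitly so the reader sees where the assumption is used, and otherwise keep the proof to three or four short sentences invoking the general theorem.
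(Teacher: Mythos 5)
Your proposal is correct and follows essentially the same route as the paper: the paper likewise combines Lemma~\ref{lorentz subset c_0} with the same term-isolation computation showing $\|a\|_\infty\le\|a\|_{p,q}$ and $\|e^j\|_{p,q}=1$, hence $\|I\|=1$, and then invokes Theorem~\ref{THM:MN-c_0 subspaces}. The only cosmetic difference is that you primarily cite Proposition~\ref{prop-MN-||I||<1} (whose hypotheses you correctly verify, since $c_0$ carries the supremum norm) and mention the theorem only as an alternative, whereas the paper goes directly through Theorem~\ref{THM:MN-c_0 subspaces}; both general results apply here.
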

\begin{proof} We verify the assumptions of Theorem~\ref{THM:MN-c_0 subspaces}
\begin{enumerate}[1.]
    \item $\ell^{p,q} \subset c_{0}$ from Lemma~\ref{lorentz subset c_0}.
    \item Since $\|\cdot\|_{c_{0}} = \|\cdot\|_{\infty} = \|\cdot\|_{\infty,\infty}$, we already know that $\|I\| = 1$ from Theorem~\ref{lemma ||I||-p12,q12}. We now have $\ell^{p,q} \hookrightarrow c_{0}$.
   \end{enumerate}
So, from Theorem~\ref{THM:MN-c_0 subspaces} the embedding operator $I$ is maximally non-compact and $\alpha(I) = 1$.
\end{proof}

\begin{proposition} Let $0< p \leq \infty$, $0< q_{1} < q_{2}  \leq \infty$, $\min\{p,q_{2}\}< \infty$, and let  $\ell^{p_{1},q_{1}}$, $\ell^{p_{2},q_{2}}$ be Lorentz sequence spaces. Then the embedding operator $I\colon \ell^{p,q_{1}} \rightarrow \ell^{p,q_{2}}$ is maximally non-compact.
\end{proposition}
\begin{proof}
We have
\begin{enumerate}[1.]
    \item $\ell^{p,q_{1}} \hookrightarrow \ell^{p,q_{2}}$ from Theorem~\ref{lemma-inclusion-p,p,q1,q2},
    \item $\ell^{p,q_{1}}$, $\ell^{p,q_{2}} \subset c_{0}$ from Lemma~\ref{lorentz subset c_0}.
\end{enumerate}
So, the prerequisites of Theorem~\ref{THM:MN-c_0 subspaces} are satisfied, the embedding operator $I$  is maximally non-compact and $\alpha(I) =\|I\|$, where the values of $\|I\|$ for some individual relations between $p$, $q_{1}$ and $q_{2}$ are computed in Theorem~\ref{lemma ||I||-p12,q12} or Theorem~\ref{Lemma:Riemann}.
\end{proof}

\begin{proposition} Let $0 < p_{1} < p_{2} \leq \infty$, $q_{1}, q_{2} \in(0,\infty]$, $\min\{p_{2},q_{2}\}< \infty$, and let  $\ell^{p_{1},q_{1}}$, $\ell^{p_{2},q_{2}}$ be Lorentz sequence spaces. Then the embedding operator $I\colon \ell^{p_{1},q_{1}} \rightarrow \ell^{p_{2},q_{2}}$ is maximally non-compact.
\end{proposition}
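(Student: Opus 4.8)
The plan is to reduce the statement to a direct application of Theorem~\ref{THM:MN-c_0 subspaces}, taking $\ell := \ell^{p_1,q_1}$ and $w := \ell^{p_2,q_2}$. Accordingly, the proof will consist of verifying the three hypotheses of that theorem, after which maximal non-compactness of $I$ — and in fact the sharper equality $\alpha(I)=\|I\|$ — follows immediately.

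First I would check hypothesis (1), namely $\ell^{p_1,q_1}\hookrightarrow\ell^{p_2,q_2}\subset c_0$. The embedding $\ell^{p_1,q_1}\hookrightarrow\ell^{p_2,q_2}$ is precisely the content of Theorem~\ref{lemma-inclusions-pq1,pq2}: since $p_1<p_2$ and $q_1,q_2\in[1,\infty]\subset(0,\infty]$, exactly one of its five cases applies — a short case split on whether $q_1\le p_1$, $q_1=\infty$, or $p_1<q_1<\infty$, together with the comparison of $q_1$ and $q_2$, exhausts all admissible pairs $(q_1,q_2)$. The inclusion $\ell^{p_2,q_2}\subset c_0$ is Lemma~\ref{lorentz subset c_0}, applicable precisely because we have assumed $\min\{p_2,q_2\}<\infty$.

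Next, hypothesis (2) — that both $\ell^{p_1,q_1}$ and $\ell^{p_2,q_2}$ are rearrangement-invariant lattices — is recorded in the remark following the definition of the Lorentz spaces and requires nothing further (the Lorentz functional depends only on the decreasing rearrangement and is monotone with respect to it). For hypothesis (3), $0<\|I\|<\infty$: finiteness is already contained in the embedding established above, and for strict positivity I would exhibit $e^1\in B_{\ell^{p_1,q_1}}$, using $(e^1)^{*}=e^1$ and the computation $\|e^1\|_{p_1,q_1}=\|e^1\|_{p_2,q_2}=1$ carried out in the proof of Theorem~\ref{lemma ||I||-p12,q12}; this yields $\|I\|\ge\|I(e^1)\|_{p_2,q_2}=1>0$.

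I do not anticipate a genuine obstacle: this proposition is meant as an illustration of the general machinery, and all the analytic work has already been done in Section~\ref{S:Lorentz}. The only points deserving a little care are (a) noting that the hypothesis constrains $p_2,q_2$ (not $p_1,q_1$), which is exactly what is needed for Lemma~\ref{lorentz subset c_0} to place the \emph{target} space inside $c_0$, and (b) confirming that the parameter ranges in Theorem~\ref{lemma-inclusions-pq1,pq2} genuinely cover every admissible pair $(q_1,q_2)\in[1,\infty]^2$ once $p_1<p_2$ is fixed. With both hypotheses checked, Theorem~\ref{THM:MN-c_0 subspaces} concludes the argument, giving $\alpha(I)=\|I\|$.
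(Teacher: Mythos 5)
Your proposal is correct and follows essentially the same route as the paper: the paper likewise reduces the statement to Theorem~\ref{THM:MN-c_0 subspaces}, citing Theorem~\ref{lemma-inclusions-pq1,pq2} for the embedding $\ell^{p_1,q_1}\hookrightarrow\ell^{p_2,q_2}$ with $0<\|I\|<\infty$ and Lemma~\ref{lorentz subset c_0} for the inclusion into $c_0$. Your additional explicit checks (the rearrangement-invariant lattice property and the positivity of $\|I\|$ via $e^1$) are left implicit in the paper but are consistent with it.
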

\begin{proof}
We have
\begin{enumerate}[1.]
    \item $\ell^{p_{1},q_{1}} \hookrightarrow \ell^{p_{2},q_{2}}$ and $0<\|I\| < \infty $ from Theorem~\ref{lemma-inclusions-pq1,pq2},
    \item $\ell^{p_{1},q_{1}}$, $\ell^{p_{2},q_{2}} \subset c_{0}$ from Lemma~\ref{lorentz subset c_0}.
\end{enumerate}
So, the prerequisites of Theorem~\ref{THM:MN-c_0 subspaces} are satisfied, the embedding operator $I$  is maximally non-compact and $\alpha(I) =\|I\|$, where the values of $\|I\|$ for some individual relations between $p_{1}$, $q_{1}$ and $q_{2}$ are computed in Theorem~\ref{lemma ||I||-p12,q12}.
\end{proof}
We will now present examples of embeddings of Lorentz sequence spaces into $\ell^{\infty}$ which are not maximally non-compact. This will be shown by finding their span $\sigma_{\ell^{p,q}}$ and applying Theorem~\ref{THM-not max noncompact}. 

\begin{proposition}   
Let $p \in(0,\infty]$, $q \in(0,\infty]$. Then the embedding operator $I\colon \ell^{p,q} \rightarrow \ell^{\infty}$, $\|I\| = 1$ is not maximally non-compact. More precisely:
\begin{enumerate}[1.]
    \item $\alpha(I) \leq 2^{-1/q} < 1$ when $1 \leq q < \infty$ and $p\leq q$,
    \item $\alpha(I) \leq 2^{-1} < 1$ when $0 < q < 1$ and $p\leq q$,
    \item $\alpha(I) \leq 2^{-1}(1+2^{-1/p}) < 1$ when $q = \infty$ and $p<q$,
    \item $\alpha(I) \leq 2^{-1}(1+2^{-1/p}) < 1$ when $q<p$.
\end{enumerate}
\end{proposition}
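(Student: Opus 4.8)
The plan is the one announced just before the statement: in each of the four cases we determine (or estimate from above) the span $\sigma_{\ell^{p,q}}$, check that $\|I\|\le\sigma_{\ell^{p,q}}<2\|I\|$, and invoke Theorem~\ref{THM-not max noncompact}, which at once gives that $I$ is not maximally non-compact and that $\alpha(I)\le\sigma_{\ell^{p,q}}/2$. The equality $\|I\|=1$ is seen as in Proposition~\ref{ex01:pq-I-c_0}: for every $a$ the single $n=1$ term gives $\|a\|_\infty=a_1^*\le\|a\|_{p,q}$, while $\|e^1\|_{p,q}=\|e^1\|_\infty=1$; moreover $\ell^{p,q}\subset c_0$ by Lemma~\ref{lorentz subset c_0} since $\min\{p,q\}<\infty$ in all four cases, so $I$ is well defined and $B_{\ell^{p,q}}\subset B_{\ell^\infty}$.

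The main work is computing $\sigma_{\ell^{p,q}}$. The key reduction is that for $y\in B_{\ell^{p,q}}$ the quantity $\sup_n y_n-\inf_n y_n$ is, up to an arbitrarily small error, already attained by the two-term sequence obtained from $y$ by retaining only one coordinate carrying (nearly) the supremum and one carrying (nearly) the infimum, and zeroing out the rest; since $\ell^{p,q}$ is a rearrangement-invariant lattice, this truncation does not increase $\|\cdot\|_{p,q}$. Hence $\sigma_{\ell^{p,q}}=\max\{1,S\}$, where $S:=\sup\{a+b:\ a\ge b\ge0,\ ae^1-be^2\in B_{\ell^{p,q}}\}$; the constant $1$ accounts for one-term sequences (and for two equally-signed nonzero entries), with $e^1$ extremal there, so in particular $\sigma_{\ell^{p,q}}\ge1=\|I\|$. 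For finite $q$ the membership constraint $ae^1-be^2\in B_{\ell^{p,q}}$ reads $a^q+2^{q/p-1}b^q\le1$; for $q=\infty$ it reads $\max\{a,2^{1/p}b\}\le1$.

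We now treat the four cases. If $p\le q$ (cases~1 and~2) then $2^{q/p-1}\ge1$, so the constraint forces $a^q+b^q\le1$. When $q\ge1$, Lemma~\ref{a+b q>1} gives $(a+b)^q\le2^{q-1}(a^q+b^q)\le2^{q-1}$, so $S\le2^{1-1/q}$ and $\sigma_{\ell^{p,q}}\le2^{1-1/q}<2$; Theorem~\ref{THM-not max noncompact} then yields $\alpha(I)\le\sigma_{\ell^{p,q}}/2\le2^{-1/q}$. When $0<q<1$, Lemma~\ref{a+b q<1} gives $(a+b)^q\le a^q+b^q\le1$, so $S\le1$, $\sigma_{\ell^{p,q}}=1$, and $\alpha(I)\le2^{-1}$. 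If $q=\infty$ and $p<\infty$ (case~3), the constraint $\max\{a,2^{1/p}b\}\le1$ forces $a\le1$ and $b\le2^{-1/p}$, so $S=1+2^{-1/p}$ (attained at $e^1-2^{-1/p}e^2$), $\sigma_{\ell^{p,\infty}}=1+2^{-1/p}<2$, and $\alpha(I)\le2^{-1}(1+2^{-1/p})$. Finally, if $q<p$ (case~4): for $p<\infty$, Theorem~\ref{lemma-inclusion-p,p,q1,q2}(2) gives $B_{\ell^{p,q}}\subset B_{\ell^{p,\infty}}$, hence $\sigma_{\ell^{p,q}}\le\sigma_{\ell^{p,\infty}}=1+2^{-1/p}$ and we conclude as in case~3; for $p=\infty$ one has $2^{q/p-1}=\tfrac{1}{2}$, and a direct inspection of the extremal problem $a^q+\tfrac{1}{2} b^q\le1$, $a\ge b\ge0$ gives $\sigma_{\ell^{\infty,q}}=\max\{1,2(2/3)^{1/q}\}<2$, whence $\alpha(I)<1$.

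The step I expect to be the main obstacle is the two-entry reduction and the extremal problem it produces: one must verify carefully that truncating $y$ to its two extremal coordinates neither increases $\|\cdot\|_{p,q}$ (this is precisely the lattice axiom) nor decreases the span by more than a prescribed $\varepsilon$, and, in case~4 with $p=\infty$, that the maximum of $a+b$ is attained on the diagonal $a=b$ (or at a one-term sequence) rather than at an interior critical point — which is the only place where the sign of $q-1$ and the inequality $2^{q/p-1}<1$ genuinely enter, and can be settled by a short monotonicity argument. A minor caveat: for $p=\infty$ in case~4 the displayed bound $2^{-1}(1+2^{-1/p})$ equals $1$, so there the substantive conclusion is the sharper estimate $\alpha(I)\le\sigma_{\ell^{\infty,q}}/2<1$.
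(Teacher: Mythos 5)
Your proposal is correct and follows essentially the same route as the paper: in each case you bound the span $\sigma_{\ell^{p,q}}$ and invoke Theorem~\ref{THM-not max noncompact}, and your explicit two-coordinate reduction (truncate $y$ to a near-supremum and a near-infimum entry, which the lattice axioms allow) is just a systematized version of the paper's direct estimates with the indices $s,i$ in cases 1--2 and the bound $y_n^*\le n^{-1/p}$ in cases 3--4. The one genuine point of departure is your treatment of case 4 at $p=\infty$: there the paper simply reuses the bound $\sigma_{\ell^{p,q}}\le 1+2^{-1/p}$, which for $p=\infty$ gives $\sigma\le 2=2\|I\|$, so neither the hypothesis of Theorem~\ref{THM-not max noncompact} nor the stated strict inequality $\alpha(I)\le 2^{-1}(1+2^{-1/p})<1$ survives; your separate computation $\sigma_{\ell^{\infty,q}}=\max\{1,\,2(2/3)^{1/q}\}<2$ from the constraint $a^{q}+\tfrac12 b^{q}\le1$ genuinely repairs this degenerate subcase, which the paper's own proof overlooks.
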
 

\begin{proof}
    We have $\ell^{p,q} \subset c_{0} \subset \ell^{\infty}$ from Lemma~\ref{lorentz subset c_0} and
    $||I|| = 1$ follows from the fact that $\|\cdot\|_{\infty} = \|\cdot\|_{\infty,\infty}$ and Theorem~\ref{lemma ||I||-p12,q12}
    \begin{enumerate}[1.]
    \item Let $1 \leq q < \infty$ and $p\leq q$. Denote $\sigma = 2^{1-1/q}$. Then $\|I\| \leq \sigma < 2\|I\|$. Assume $y \in B_{\ell^{p,q}}$, then $ y \in B_{\ell^\infty} $ since $\|I\|=1$, hence $ |y_{j}| \leq 1 \leq \sigma$. We claim that 
    
    \begin{equation}\label{eq03:pq-I-infty}
        \sup y - \inf y \leq \sigma.
    \end{equation}
    
    Indeed, given $\varepsilon > 0$, we find from the definition of supremum and infimum $s,i \in \mathbb{N}$ such that $y_{s} > \sup y - \varepsilon$ and $y_{i} < \inf y + \varepsilon$. We get
    \begin{flalign*}
        1 & \geq \|y\|_{p,q} \\
        & = \left(\displaystyle\sum_{n=1}^{\infty} (y_{n}^{*})^{q} n^{\frac{q}{p} - 1}\right)^{\frac{1}{q}}\\ 
        & \geq \left(\displaystyle\sum_{n=1}^{\infty} (y_{n}^{*})^{q}\right)^{\frac{1}{q}} = \left(\displaystyle\sum_{n=1}^{\infty} |y_{n}|^{q}\right)^{\frac{1}{q}}\\ 
        &\geq(|y_{s}|^{q}+|y_{i}|^{q})^{1/q}\\
        & \geq 2^{1/q - 1}(|y_{s}|+|y_{i}|)\\
        & > \frac{1}{\sigma}(\sup y - \inf y - 2\varepsilon),
    \end{flalign*}
    where the second inequality follows from our prerequisite $p\leq q$ thus $n^{\frac{q}{p} - 1} \geq 1$ and the fourth follows from the Lemma~\ref{a+b q>1}
    
    Now when sending $\varepsilon \rightarrow 0_{+}$ we get our claim. Therefore 
    $$
        \sigma_{\ell^{p,q}} \leq \sigma < 2\|I\|
    $$ 
    and
    all the prerequisites of Theorem~\ref{THM-not max noncompact} are satisfied, so $I$ is not maximally non-compact and $\alpha(I) \leq \sigma/2 < 1 = \|I\|$.

    \item Let $0< q < 1$ and $p\leq q$. Denote $\sigma =1$. Then $\|I\| \leq \sigma < 2\|I\|$. Analogously to the previous case we assume $y \in B_{\ell^{p,q}}$, hence $ |y_{j}| \leq 1 \leq \sigma$ and we claim that 
    
    \begin{equation*}
        \sup y - \inf y \leq 1.
    \end{equation*}
    
    Indeed, given $\varepsilon > 0$, we find $s,i \in \mathbb{N}$ such that $y_{s} > \sup y - \varepsilon$ and $y_{i} < \inf y + \varepsilon$. We get
    \begin{flalign*}
        1 & \geq \|y\|_{p,q} \\
        & = \left(\displaystyle\sum_{n=1}^{\infty} (y_{n}^{*})^{q} n^{\frac{q}{p} - 1}\right)^{\frac{1}{q}}\\ 
        & \geq \left(\displaystyle\sum_{n=1}^{\infty} (y_{n}^{*})^{q}\right)^{\frac{1}{q}} = \left(\displaystyle\sum_{n=1}^{\infty} |y_{n}|^{q}\right)^{\frac{1}{q}}\\ 
        &\geq(|y_{s}|^{q}+|y_{i}|^{q})^{1/q}\\
        & \geq |y_{s}|+|y_{i}|\\
        & > \sup y - \inf y - 2\varepsilon,
    \end{flalign*}
    where the second inequality follows from our prerequisite $p\leq q$ thus $n^{\frac{q}{p} - 1} \geq 1$ and the fourth follows from the Lemma~\ref{a+b q<1} 
    
    Now when sending $\varepsilon \rightarrow 0_{+}$ we get our claim. Therefore 
    $$
        \sigma_{\ell^{p,q}} \leq 1 < 2\|I\|
    $$ 
    and
    all the prerequisites of Theorem~\ref{THM-not max noncompact} are satisfied, so $I$ is not maximally non-compact and $\alpha(I) \leq 1/2 < 1 = \|I\|$.

    \item Now let $q = \infty$.
    We will proceed with the proof similarly to the way we did in the first part. The only difference is that we set 
    $\sigma = 1+2^{-1/p}$. Assume $y\in B_{\ell^{p,\infty}}$.
    We again need to verify that the inequality 
        \begin{equation}\label{eq2-sup-inf}
            \sup y - \inf y \leq \sigma.
        \end{equation}
    Let $y\in B_{\ell^{p,\infty}}$, then
    \begin{equation}\label{eq04:pq-I-infty}
        ||y||_{p,\infty} = \displaystyle\sup_{n\in \mathbb{N}} \{n^{1/p}y_{n}^{*}\} \leq 1 \text{, hence }  \displaystyle\sup_{n\in \mathbb{N}} \{y_{n}\} \leq 1.
    \end{equation}
    Inequality \eqref{eq04:pq-I-infty} implies that the decreasing rearrangement $y^{*}$ of $y$ must satisfy $y_{n}^{*}\leq n^{-1/p}$  for all $n \in \mathbb{N}$.
    So every sequence from $B_{\ell^{p,\infty}}$ has its decreasing rearrangement bounded from above by the decreasing sequence $\{n^{-1/p}\}_{n=1}^{\infty}$ and from below by $\{-n^{-1/p}\}_{n=1}^{\infty}$. We shall now distinguish two cases.
    \begin{enumerate}[a)]
        \item If $\sup y \leq 2^{-1/p}$, then from the observation above $\inf y \geq -1$ and inequality \eqref{eq2-sup-inf} holds.
        \item If $\sup y > 2^{-1/p}$, then from the observations above $y_{n}^{*}$ is bounded by the decreasing sequence $n^{-1/p}$, so there can be only one index $j \in \mathbb{N}$ such that $y_{j} > 2^{-1/p}$, therefore $y_{1}^{*} = y_{j}$
        and we get that for all $ i \neq j \colon |y_{i}| \leq 2^{-1/p}$, which implies that $\inf y \geq -2^{-1/p}$. Thus $\sup y - \inf y \leq 1 - (-2^{-1/p}) = \sigma$ holds for all $y\in B_{\ell^{p,q}}$, and inequality \eqref{eq2-sup-inf} follows.
    \end{enumerate}

    We computed that $\sigma_{\ell^{p,q}} \leq \sigma = 1+2^{-1/p} < 2\|I\|$.
    Now again from Theorem~\ref{THM-not max noncompact} we get that $I$ is not maximally non-compact and $\alpha(I) \leq \varrho = 2^{-1}(1+2^{-1/p}) < 1$.

     \item Now let $q<p$.
     By the second part of Theorem~\ref{lemma-inclusion-p,p,q1,q2} $\ell^{p,q} \hookrightarrow \ell_{p,\infty}$ and $\| y \|_{p,\infty}\leq \| y \|_{p,q}$  for all $ y \in \ell^{p,q}$.
     Now assume $y\in B_{\ell^{p,q}}$. Then $y\in B_{\ell^{p,\infty}}$, so from the proof of the previous case, where  $q = \infty$, we again get $y_{n}^{*}\leq n^{-1/p}$ and $\sigma_{\ell^{p,q}} \leq  1+2^{-1/p} < 2\|I\|$.
    Now again from Theorem~\ref{THM-not max noncompact} we get that $I$ is not maximally non-compact and 
    $$\alpha(I) \leq \varrho = 2^{-1}(1+2^{-1/p}) < 1.$$
   
\end{enumerate}
\end{proof}

\noindent \textbf{Acknowledgment.} This research was in part supported by the grant no.~23-04720S of the Czech Science Foundation.

The author would also like to thank both anonymous referees for their thorough and critical reading of the paper and for plenty of very valuable remarks and suggestions for adjustments and further revision.

\bibliography{bibliography}

\

\end{document}